%
%
%

\documentclass{svproc}
%
%
\usepackage{latexsym, amsmath, amstext, amssymb, amsfonts, amscd, bm, array, multirow, amsbsy, mathrsfs}
\usepackage{graphicx,scalerel}
\newcommand\wh[1]{\hstretch{2}{\hat{\hstretch{.5}{#1}}}}
\usepackage{url}
\newcommand{\Conf}{\mathrm{Conf}}

\def\dd{\mathrm{d}}
\def\H{\mathrm{H}}
\def\h{\mathfrak{h}}

\def\cN{\mathcal{N}}
\def\cL{\mathcal{L}}
\def\ric{\mathrm{Ric}}
\def\tm{\wh{M}}
\def\tg{\wh{g}}

\usepackage{graphicx}

\begin{document}
\mainmatter              
\title{$\varepsilon\,$-contact structures and six-dimensional supergravity }
\titlerunning{$\varepsilon\,$-contact structures and six-dimensional supergravity}  
%
\author{\'Angel Jes\'us Murcia Gil\inst{1,2}}
\authorrunning{\'Angel Murcia} 
%
\tocauthor{\'Angel Murcia}
\institute{Instituto de F\'isica Te\'orica UAM/CSIC, C/ Nicol\'as Cabrera, 13-15, C.U. Cantoblanco, E-28049 Madrid, Spain,\\
\email{angel.murcia@csic.es},\\ 
\and
Department of Mathematics, University of Hamburg. Bundesstra$\beta$e 55, D-20146 Hamburg, Germany
}

\maketitle              

\begin{abstract}
We introduce the concept of $\varepsilon\,$-contact metric structures on oriented (pseudo-)Riemannian three-manifolds, which encompasses the usual Riemannian contact metric, Lorentzian contact metric and para-contact metric structures, but which also allows the possibility for the Reeb vector field to be null. We investigate in more detail this latter case, which we call null contact structure. We observe that it is possible to extend in a natural and meaningful way both the Sasaki and K-contact conditions for null-contact structures, but we find that they are not equivalent conditions, in contradistinction to the situation for non-lightlike Reeb vector fields. Finally, we define the notion of $\varepsilon\eta\,$-Einstein structures and we discover that appropriate direct products of these structures produce solutions of six-dimensional minimal supergravity coupled to a tensor multiplet with constant dilaton. 
\keywords{contact metric manifolds, null contact structures, $\eta\,$-Einstein manifolds, supergravity, Lorentzian geometry with torsion.}
\end{abstract}
\section{Introduction}

Some of the most challenging problems which are being currently studied in Differential Geometry were originally encountered  in the framework of String Theory or supergravity \cite{Cecotti,FreedmanProeyen,Ortin}. Just to mention a few, we may think of mirror symmetry \cite{Lerche,Dixon,MS}, the Strominger system \cite{Strominger,LiYau,Mario} or the the classification of all simply-connected manifolds which admit bosonic supergravity solutions in a certain dimension \cite{Granya,Fei,Mario2}.  

This latter problem is the one we will focus our efforts on. More concretely, we will intend to help clear the panorama for the particular case of  six-dimensional minimal supergravity coupled to a tensor multiplet with constant dilaton \cite{Nishino:1984gk,Nishino:1986dc}. If $\tm$ is a six-dimensional manifold, the bosonic configuration space $\Conf(\tm)$ of this theory is formed by all pairs $(\wh{g}, \mathrm{H})$ of Lorentzian metrics $\wh{g}$ on $\tm$ and three-forms $\H \in \Omega^3(\tm)$. An element $(\wh{g},\H) \in \Conf(\tm)$ is a solution of such theory if the following equations hold:
\begin{equation}
\label{eq:sugraeqintro}
\mathrm{Ric}^{\wh{g}}-\frac{1}{4} \H \circ \H=0\, , \quad \dd \H=0 \, , \quad \dd \star_{\wh{g}} \H=0 \, , \quad \vert \H \vert_{\wh{g}}^2=0\,,
\end{equation}
where $\mathrm{Ric}^{\wh{g}}$ is the Ricci curvature tensor of $\wh{g}$, $\star_{\wh{g}}: \Omega^3(\tm) \rightarrow \Omega^3(\tm)$ is the Hodge star map and $\H \circ \H$ is a symmetric $(0,2)$-tensor defined by $(\H \circ \H)(X,Y)=\wh{g}^{-1}(\iota_X H,\iota_Y \H)$ for every $X, Y \in \mathfrak{X}(\tm)$. With these provisos in mind, we shall look for solutions $(\wh{g}, \H) \in \Conf(\tm)$ to equation (\ref{eq:sugraeqintro}) assuming the following direct-product ansatz:
\begin{equation}
(\tm,\wh{g})=(N \times X, \chi \oplus h)\, , 
\end{equation}
where $(N,\chi)$ is 3-dimensional oriented Lorentzian manifold and $(X,h)$ a 3-dimensional oriented Riemannian manifold. After imposing an ansatz for $\H$ consistent with this splitting, the initially six-dimensional problem is in turn divided into two three-dimensional problems. 

It is precisely at this point where $\varepsilon\,$-contact metric structures arise. They are defined over a three-dimensional oriented manifold and encapsulate the usual notions of Riemannian contact metric structures, Lorentzian contact metric structures and para-contact metric structures \cite{Blair,Calvaruso,CalvarusoII} when the aforementioned three-manifold is equipped with a metric of Riemannian or Lorentzian signature, correspondingly. However, these $\varepsilon\,$-contact metric structures include a fourth type of structures, which we call null contact metric structures. They are defined over Lorentzian manifolds and are characterized by having a null Reeb vector field. These null contact metric structures do not seem, to the best of our knowledge, to have been previously explored in the literature and we have found them to enjoy fairly intriguing properties, such as the corresponding Sasaki condition \cite{Sasaki} for null contact structures not being equivalent to that of K-contactness \cite{Blair}.  

In the context of $\varepsilon\,$-contact metric structures, we introduce the notion of $\varepsilon\eta\,$-Einstein structures, characterized by possessing a Ricci curvature tensor with a prescribed given structure. For non-null Reeb vector fields, they are particular cases of the standard $\eta\,$-Einstein structures \cite{Okumura,Boyer} and their definition is justified because, as it is proven in Theorem \ref{th:elteorema}, it is possible to intertwine $\varepsilon\eta\,$- structures on $(N,\chi)$ and $(X,h)$ respectively to yield a direct-product six-dimensional manifold $(\tm,\wh{g})$ admitting solutions of six-dimensional supergravity.


The outline of the document is as follows. First, we introduce the concept of $\varepsilon\,$-contact metric structure and study some of its most important properties. Secondly, we focus on null contact structures, specifying their more characteristic features and showing that they admit meaningful notions for the Sasaki and K-contact conditions. Finally, we define $\varepsilon\eta\,$-Einstein structures and show that they can be used for the construction of solutions of six-dimensional minimal supergravity coupled to a tensor multiplet with constant dilaton.

The contents of this contribution are based on a previous work \cite{Angel} of the author together with C.S. Shahbazi. 
\subsubsection*{Acknowledgements.}
I acknowledge support from the Deutscher Akademischer Austauschdienst (DAAD) through the Short-Term Research Grant No. 91791300  and from the Spanish FPU Grant, No.  FPU17/04964. I was also indirectly supported by the MCIU/AEI/FEDER UE grant PGC2018-095205-B-I00 and by the ``Centro de Excelencia Severo Ochoa'' Program grant SEV-2016-0597. Finally, I wish to thank C.S. Shahbazi for useful comments and my parents for their permanent support.

On the other hand, this is a preprint of the following chapter: \'Angel Murcia, $\varepsilon\,$-contact structures and six-dimensional supergravity, published in ``Developments in Lorentzian Geometry", edited by Alma L. Albujer, Magdalena Caballero, Alfonso García-Parrado, Jónatan Herrera and Rafael Rubio, 2022, Springer reproduced with permission of Springer Nature Switzerland. The final authenticated version is available online at: https://doi.org/10.1007/978-3-031-05379-5. Please access this link:\\ \url{https://link.springer.com/book/10.1007/978-3-031-05379-5}

\section{$\varepsilon\,$-contact metric structures}

In this section we will present the definition of $\varepsilon\,$-contact metric structures and show that it encompasses the usual notions of Riemannian contact metric structures, Lorentzian contact metric structures and para-contact metric structures. We will study also some of the most relevant features of $\varepsilon\,$-contact metric structures and define the Sasaki and K-contact conditions.
\begin{definition}
Let $(M,g)$ be an oriented Riemannian or pseudo-Riemannian three-manifold. An $\varepsilon\,$-contact metric structure (or just $\varepsilon\,$-contact structure) on $M$ is a triple $(g,\alpha, \varepsilon)$, with $\varepsilon \in \{-1,0,1\}$ and $\alpha$ a one-form $\alpha \in \Omega^1(M)$ such that:
\begin{equation}
\alpha= \star_g \dd \alpha\, , \quad \vert \alpha \vert_g^2=\varepsilon\,, \label{eq:defcontact}
\end{equation}
where $\star_g: \Omega^{r}(M) \rightarrow \Omega^{3-r}(M)$ $(r=0,1,2,3)$ denotes the Hodge dual with respect to $g$ and the orientation fixed on $M$, which in turn is said to be an $\varepsilon\,$-contact (metric) three-manifold. If $g$ is Lorentzian, we will assume it to be oriented and time-oriented. 
\end{definition}
\begin{remark}
When denoting $\varepsilon\,$-contact structures $(g,\alpha,\varepsilon)$, we might drop sometimes the number $\varepsilon$ whenever it is clear from the context its precise value and no confusion may arise. Also, owing the fact that we will always assume the presence of a (pseudo-)Riemannian metric, we will use indistinctly the nomenclature $\varepsilon\,$-contact metric structure and $\varepsilon\,$-contact  structure. Finally, note that the definition given here of $\varepsilon\,$-contact metric structures refers only to three dimensions, so unless otherwise stated, we will always suppose that we work with three-dimensional manifolds. 
\end{remark}

\begin{remark}
The equation $\alpha= \star_g \dd \alpha$ can be equivalently expressed as:
\begin{equation}
\star_g \alpha= s_g  \dd \alpha\,,
\end{equation}
where $s_g=1$ if $g$ is Riemannian and $s_g=-1$ if $g$ is Lorentzian. 
\end{remark}
Given any $\varepsilon\,$-contact structure $(g,\alpha,\varepsilon)$ on $M$, we define the Reeb vector field $\xi \in \mathfrak{X}(M)$ and the endomorphisms $\phi: TM \rightarrow TM$ and $\mathfrak{h}: TM \rightarrow TM$ as follows:
\begin{equation}
\xi=\alpha^\sharp\, , \quad \phi(v)=-s_g (\iota_v \star_g \alpha)^\sharp\, , \quad \mathfrak{h}(v)=(\mathcal{L}_\xi \phi )(v)  \quad \forall v \in TM \,, \label{eq:defphi}
\end{equation}
where $\sharp: T^*M \rightarrow TM$ denotes the musical isomorphism defined by $g$, $\iota_v$ the interior product with $v$ and $\mathcal{L}$ the Lie derivative.
\begin{lemma}
\label{lemma:1}
Let $(g,\alpha,\varepsilon)$ be an $\varepsilon\,$-contact structure on $M$. Then the following identities hold:
\begin{eqnarray}
\label{eq:lemma11}
g(\mathrm{Id}\otimes \phi)=-g(\phi \otimes \mathrm{Id})=\dd \alpha \, ,& \quad &\phi(\xi)=0\, , \quad \alpha \circ \phi=0\, , \\ \phi^2=s_g (-\varepsilon\, \mathrm{Id}+ \xi \otimes \alpha) \, , & &g (\phi \otimes \phi)=s_g (\varepsilon g -\alpha \otimes \alpha)\,,\label{eq:lemma12}
\end{eqnarray}
where $\mathrm{Id}:TM\rightarrow TM$ denotes the identity map. 
\end{lemma}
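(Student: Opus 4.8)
The plan is to reduce every identity to the single relation $g(\phi(X),Y)=-\dd\alpha(X,Y)$ and then read off the five claims by elementary tensor algebra.

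First I would rewrite $\phi$ so that $\star_g\alpha$ no longer appears. By the Remark above, $\star_g\alpha=s_g\,\dd\alpha$ (equivalently $\star_g^2=s_g$ on $1$- and $2$-forms in three dimensions), and by the standard Hodge identity $\iota_X\star_g\omega=\star_g(\omega\wedge X^\flat)$, valid for any $1$-form $\omega$, the second equation of (\ref{eq:defphi}) becomes $\phi(X)=-s_g\bigl(\star_g(\alpha\wedge X^\flat)\bigr)^\sharp=-(\iota_X\dd\alpha)^\sharp$. Pairing with an arbitrary $Y$ gives $g(\phi(X),Y)=-(\iota_X\dd\alpha)(Y)=-\dd\alpha(X,Y)$. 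Antisymmetry of $\dd\alpha$ then yields at once the first line of (\ref{eq:lemma11}), namely $g(\mathrm{Id}\otimes\phi)=\dd\alpha=-g(\phi\otimes\mathrm{Id})$, and in particular $\phi$ is skew-symmetric with respect to $g$. Substituting $X=\xi=\alpha^\sharp$ into $\phi(X)=-s_g(\star_g(\alpha\wedge X^\flat))^\sharp$ and using $\xi^\flat=\alpha$ together with $\alpha\wedge\alpha=0$ gives $\phi(\xi)=0$; and then $(\alpha\circ\phi)(Y)=g(\xi,\phi(Y))=-g(\phi(\xi),Y)=0$ by skew-symmetry, so the two remaining identities of (\ref{eq:lemma11}) hold.

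For the quadratic identities (\ref{eq:lemma12}) I would begin with $g(\phi\otimes\phi)$. Since $(\phi(X))^\flat=-s_g\star_g(\alpha\wedge X^\flat)$ and $\star_g$ is an isometry up to the factor $s_g$ on $2$-forms in three dimensions — that is, $\langle\star_g\sigma,\star_g\tau\rangle_g=s_g\langle\sigma,\tau\rangle_g$ — one finds $g(\phi(X),\phi(Y))=s_g\,\langle\alpha\wedge X^\flat,\alpha\wedge Y^\flat\rangle_g$. Expanding the inner product of these decomposable $2$-forms by the Gram determinant and using $\langle\alpha,\alpha\rangle_g=\vert\alpha\vert_g^2=\varepsilon$, $\langle\alpha,X^\flat\rangle_g=\alpha(X)$ and $\langle X^\flat,Y^\flat\rangle_g=g(X,Y)$ gives $\langle\alpha\wedge X^\flat,\alpha\wedge Y^\flat\rangle_g=\varepsilon\,g(X,Y)-\alpha(X)\alpha(Y)$, which is precisely the second identity of (\ref{eq:lemma12}). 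Finally, the identity for $\phi^2$ follows from skew-symmetry of $\phi$: $g(\phi^2(X),Y)=-g(\phi(X),\phi(Y))=-s_g\bigl(\varepsilon\,g(X,Y)-\alpha(X)\alpha(Y)\bigr)=g\bigl(s_g(-\varepsilon\,X+\alpha(X)\xi),Y\bigr)$, and nondegeneracy of $g$ then forces $\phi^2=s_g(-\varepsilon\,\mathrm{Id}+\xi\otimes\alpha)$.

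Once the three Hodge-theoretic facts are in place — the value of $\star_g^2$ on $1$- and $2$-forms, the sign in $\iota_X\star_g\omega=\star_g(\omega\wedge X^\flat)$, and the isometry factor $\langle\star_g\sigma,\star_g\tau\rangle_g=s_g\langle\sigma,\tau\rangle_g$ — the rest is just bookkeeping, and the step I expect to be the only real (and minor) obstacle is pinning down these signs in a way that is uniform over the Riemannian case and the three Lorentzian cases $\varepsilon\in\{-1,0,1\}$. The crucial observation is that none of these signs depends on $\varepsilon$, so the null case $\varepsilon=0$ is treated on exactly the same footing as the others and no case split is needed; alternatively one could instead verify the identities in an adapted orthonormal coframe when $\varepsilon\neq0$ and in a null (Witt) coframe when $\varepsilon=0$, but the frame-free argument above avoids this.
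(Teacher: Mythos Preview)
Your proof is correct and close in spirit to the paper's, but the quadratic identities are obtained in the opposite order and by a different mechanism. The paper computes $\phi^2$ first by a direct Hodge/interior--product manipulation, expanding $\iota_v(\alpha\wedge\star_g\alpha)$ via the Leibniz rule to reach $\phi^2(v)=-s_g\varepsilon\,v+s_g\alpha(v)\xi$, and then reads off $g(\phi\otimes\phi)$ from $g(\phi(v_1),\phi(v_2))=-g(v_1,\phi^2(v_2))$. You instead prove $g(\phi\otimes\phi)$ first, using the Hodge isometry $\langle\star_g\sigma,\star_g\tau\rangle_g=s_g\langle\sigma,\tau\rangle_g$ together with the Gram expansion of $\langle\alpha\wedge X^\flat,\alpha\wedge Y^\flat\rangle_g$, and then recover $\phi^2$ from skew--symmetry and nondegeneracy of $g$. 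Your route trades the paper's iterated interior--product computation for two well--known linear--algebra facts (Hodge isometry and Gram determinant), which is arguably cleaner and makes the independence from $\varepsilon$ transparent; the paper's route, on the other hand, never invokes the isometry property of $\star_g$ and stays entirely within interior products. Either way the first line of identities is obtained exactly as in the paper.
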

\begin{proof}
By equation (\ref{eq:defphi}) it is clear that:
\begin{equation}
\begin{split}
g(v_1, \phi(v_2))&=-s_g g(v_1,(\iota_{v_2} \star_g \alpha)^\sharp )=-s_g \star_g \alpha(v_2,v_1)\\&=\dd \alpha(v_1,v_2)=-\dd \alpha(v_2,v_1)=-g(\phi(v_1),v_2)\, , \quad v_1,v_2 \in \mathfrak{X}(M)\,.
\end{split}
\end{equation} 
This proves the first equation in (\ref{eq:lemma11}). Similarly, since for any 1-form $\eta \in \Omega^1(M)$ we have that $\iota_{\eta^\sharp} \star_g \eta=0$, the second and third equations in (\ref{eq:lemma11}) follow. Regarding the first equation in (\ref{eq:lemma12}):
\begin{equation}
\begin{split}
\phi (\phi (v))=[\iota_{(\iota_{v} \star_g \alpha)^\sharp} (\star_g \alpha)]^\sharp&=[(\star_g \alpha)((\iota_{v} \star_g \alpha)^\sharp)]^\sharp=[\star_g(\alpha \wedge (\iota_{v} \star_g \alpha) )]^\sharp\\&=[-\star_g (\iota_{v} (\alpha \wedge \star_g \alpha))+\alpha(v) (\star_g \star_g \alpha)]^\sharp\\&=-s_g \varepsilon \, v+s_g \alpha (v) \xi\, , \qquad v\in \mathfrak{X}(M)\, 
\end{split}
\end{equation}
Finally, the second equation in (\ref{eq:lemma12}) is proven upon the use of the previous expressions:
\begin{equation}
g(\phi(v_1),\phi(v_2))=-g(v_1,\phi^2(v_2))=s_g \varepsilon g(v_1,v_2)-s_g \alpha(v_1) \alpha(v_2)\, , \quad v_1, v_2 \in \mathfrak{X}(M)\,.
\end{equation}
\end{proof}
Using Lemma \ref{lemma:1}, we are able to prove:
\begin{proposition}
Let $M$ be an oriented three-manifold. An $\varepsilon\,$-contact metric structure $(g,\alpha,\varepsilon)$ on $M$ defines a Riemannian contact metric structure if $g$ is Riemannian, a Lorentzian contact metric structure if $g$ is Lorentzian and $\varepsilon=-1$ or a para-contact metric structure if $g$ is Lorentzian and $\varepsilon=1$. The converse is also true for the three previous cases. 
\end{proposition}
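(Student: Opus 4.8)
The strategy is to show, for each of the three signature/sign combinations listed, that the triple $(g,\alpha,\varepsilon)$ determines exactly the package of tensors appearing in the classical definition, and conversely; the algebraic content is already contained in Lemma~\ref{lemma:1}, so the remaining work is mostly bookkeeping of signs and of the orientation.

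I would begin with the contact condition itself. Since $\alpha=\star_g\dd\alpha$ is equivalent to $\dd\alpha=s_g\star_g\alpha$, one has
\[
\alpha\wedge\dd\alpha=s_g\,\alpha\wedge\star_g\alpha=s_g\,\vert\alpha\vert_g^2\,\mathrm{vol}_g=s_g\,\varepsilon\,\mathrm{vol}_g ,
\]
which is nowhere zero precisely when $\varepsilon=\pm1$. In Riemannian signature $\vert\alpha\vert_g^2\ge 0$ forces $\varepsilon\in\{0,1\}$, and $\varepsilon=0$ would give $\alpha\equiv 0$, so the genuinely contact Riemannian case is $\varepsilon=1$. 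Hence in all three listed cases $\alpha$ is a contact form, and it remains to identify the compatible tensors.

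Next, I would set $\eta:=\alpha$ and $\xi':=\varepsilon\,\xi$, so that $\eta(\xi')=\varepsilon^2=1$, keep $\phi$ as in~\eqref{eq:defphi}, and feed these into Lemma~\ref{lemma:1}. From $\phi^2=s_g(-\varepsilon\,\mathrm{Id}+\xi\otimes\alpha)$ one gets $\phi^2=-\mathrm{Id}+\xi'\otimes\eta$ when $s_g\varepsilon=1$ (i.e.\ $g$ Riemannian, or $g$ Lorentzian with $\varepsilon=-1$) and $\phi^2=\mathrm{Id}-\xi'\otimes\eta$ when $s_g\varepsilon=-1$ (i.e.\ $g$ Lorentzian with $\varepsilon=1$); together with $\phi\xi'=0$ and $\eta\circ\phi=0$ this is an almost contact, respectively an almost para-contact, structure. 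The identity $g(\phi\,\cdot\,,\phi\,\cdot\,)=s_g(\varepsilon g-\alpha\otimes\alpha)$ then reads $g-\eta\otimes\eta$ (Riemannian), $g+\eta\otimes\eta$ (Lorentzian, $\varepsilon=-1$, $\xi'$ timelike) and $-g+\eta\otimes\eta$ (Lorentzian, $\varepsilon=1$), i.e.\ the metric compatibility of a Riemannian contact metric, a Lorentzian contact metric and a para-contact metric structure, respectively. In the last case one must additionally check that the $\pm1$-eigenspaces of $\phi$ on $\ker\alpha$ are both one-dimensional: since $\varepsilon=1$ makes $\xi$ spacelike, $\ker\alpha=\xi^{\perp}$ is a Lorentzian $2$-plane, $\phi$ restricted to it is diagonalizable with square $\mathrm{Id}$, and metric compatibility forces each eigenspace to be null, so neither can be all of $\ker\alpha$ and each is a line. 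Finally the first identity of Lemma~\ref{lemma:1}, $g(X,\phi Y)=\dd\alpha(X,Y)=\dd\eta(X,Y)$, is the contact metric condition (up to the usual normalization of the fundamental two-form), which closes the forward implication.

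For the converse I would take one of the three classical structures $(\phi,\xi,\eta,g)$ on $M$, set $\alpha:=\eta$ and $\varepsilon:=\vert\eta\vert_g^2$ (which equals $1$, $-1$, $1$ in the three cases), and orient $M$ by the nowhere-zero three-form $\eta\wedge\dd\eta$. It then remains to verify $\alpha=\star_g\dd\alpha$: working in a frame adapted to $\phi$ (pseudo-orthonormal in the Riemannian and Lorentzian-contact cases, and $\xi$ together with a null pair diagonalizing $\phi$ in the para-contact case), the contact metric condition pins down $\dd\eta$ and one computes directly that $\star_g\dd\eta=\eta$ for this orientation; one checks in the same frame that~\eqref{eq:defphi} returns the original $\phi$ and the Reeb field $\varepsilon\xi$. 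The step I expect to be the real nuisance is not any one of these but the global bookkeeping: the present normalization has $\alpha=\xi^{\flat}$, hence $\eta(\xi)=\varepsilon$ rather than $1$, and carries a fixed orientation, so reconciling the statement with the various sign and orientation conventions used in the literature for Lorentzian contact metric and para-contact metric structures is precisely what the rescaling $\xi'=\varepsilon\xi$ and the orientation choice in the converse are there to absorb.
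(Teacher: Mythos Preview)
Your proposal is correct and follows essentially the same approach as the paper: both arguments invoke Lemma~\ref{lemma:1} to read off the algebraic identities $\phi^2=s_g(-\varepsilon\,\mathrm{Id}+\xi\otimes\alpha)$, $g(\phi\otimes\phi)=s_g(\varepsilon g-\alpha\otimes\alpha)$ and $g(\mathrm{Id}\otimes\phi)=\dd\alpha$, and then match these against the standard definitions case by case, with the converse recovered by reversing the identification. Your write-up is considerably more explicit than the paper's (which is essentially a pointer to Lemma~\ref{lemma:1} and the references), in that you spell out the contact-form computation $\alpha\wedge\dd\alpha=s_g\varepsilon\,\mathrm{vol}_g$, the rescaling $\xi'=\varepsilon\xi$ needed to reconcile the paper's normalization $\alpha(\xi)=\varepsilon$ with the literature's $\eta(\xi)=1$, the eigenbundle check in the para-contact case, and the orientation choice in the converse---all of which the paper leaves implicit.
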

\begin{proof}
By Lemma \ref{lemma:1}, we observe that the defining conditions of a Riemannian contact metric structure \cite{Blair} are satisfied if $g$ is Riemannian. Similarly, if $g$ is Lorentzian and $\varepsilon=-1$, we learn that we have a Lorentzian contact structure by direct comparison to the usual definition \cite{Calvaruso}. Finally, if $g$ is Lorentzian and $\varepsilon=1$ we conclude that $(g,\alpha)$ defines a para-contact structure on $M$ after a careful look of the standard definition \cite{CalvarusoII}. The converse for these three cases can be seen to be true by reconstructing, through the appropriate use of (\ref{eq:lemma11}) and (\ref{eq:lemma12}), the equations in (\ref{eq:defcontact}). 
\end{proof}
On the other hand, it is always possible to define a special (local) frame which is highly convenient for computations. We call it $\varepsilon\,$-contact frame.
\begin{definition}
Let $(g,\alpha,\varepsilon)$ be an $\varepsilon\,$-contact structure. An $\varepsilon\,$-contact frame is a local frame $\{\xi, u, \phi(u)\}$, with $u$ a locally-defined vector field which does not vanish in its domain of definition, such that:
\begin{equation}
g(u,\xi)=1-\varepsilon^2\, , \quad g(u,u)=s_g \varepsilon\,.
\end{equation}
In the case $\varepsilon=0$, we will refer to an $\varepsilon\,$-contact frame as a light-cone frame. 
\end{definition}
Note that $\varepsilon\,$-contact frames always exist, at least locally. An $\varepsilon\,$-contact frame satisfies:
\begin{equation}
g(\xi,\xi)=\varepsilon\, , \quad g(\xi, \phi(u))=0\, , \quad g(u,\phi(u))=0\,, \quad g (\phi (u), \phi(u))=1\,.
\end{equation}
Some additional properties of $\varepsilon\,$-contact structures are given by the following Proposition: 
\begin{proposition}
Let $(g,\alpha,\varepsilon)$ be an $\varepsilon\,$-contact structure on $M$. Then:
\begin{equation}
\nabla_\xi \xi=0\, , \quad \nabla_\xi \phi=0\, , \quad \mathfrak{h}(\xi)=0\, , \quad \mathrm{Tr}(\mathfrak{h})=0\, , \quad \mathcal{L}_\xi \alpha=0\, , \quad \mathfrak{h} \circ \phi=-\phi \circ \mathfrak{h}\,,
\end{equation}
where $\nabla$ is the Levi-Civita connection of $g$. Furthermore, $\mathfrak{h}$ is symmetric with respect to $g$. 
\end{proposition}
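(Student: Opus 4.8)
The plan is to deduce all the identities from Lemma~\ref{lemma:1} together with two scalar facts: $g(\xi,\xi)=\varepsilon$ is constant, and, by the Remark above, $\dd\alpha=s_g\star_g\alpha$.

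First I would settle the easy ones. By Cartan's formula, $\mathcal{L}_\xi\alpha=\iota_\xi\dd\alpha+\dd(\iota_\xi\alpha)$; here $\iota_\xi\alpha=g(\xi,\xi)=\varepsilon$ is constant and $(\iota_\xi\dd\alpha)(Y)=\dd\alpha(\xi,Y)=-g(\phi\xi,Y)=0$ by (\ref{eq:lemma11}), so $\mathcal{L}_\xi\alpha=0$. Since $\nabla$ is torsion-free, $(\mathcal{L}_\xi\alpha)(Y)=g(\nabla_\xi\xi,Y)+g(\xi,\nabla_Y\xi)$ and the last term equals $\tfrac12 Y(g(\xi,\xi))=0$; nondegeneracy of $g$ then gives $\nabla_\xi\xi=0$. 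Next, $\mathfrak{h}(\xi)=(\mathcal{L}_\xi\phi)(\xi)=\mathcal{L}_\xi(\phi\xi)-\phi([\xi,\xi])=0$ because $\phi\xi=0$. And $\mathrm{Tr}(\mathfrak{h})=\mathrm{Tr}(\mathcal{L}_\xi\phi)=\xi(\mathrm{Tr}\,\phi)=0$, since the Lie derivative commutes with contractions and $\phi$ is $g$-skew (hence trace-free) by (\ref{eq:lemma11}).

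The hard part is $\nabla_\xi\phi=0$, and the trick I would use is to rewrite the contact condition as $\dd\alpha=s_g\star_g\alpha$. The Levi-Civita connection is metric, hence commutes with $\star_g$ and with the musical isomorphisms, so $\nabla_\xi\dd\alpha=s_g\star_g(\nabla_\xi\alpha)=s_g\star_g\big((\nabla_\xi\xi)^\flat\big)=0$ by the identity $\nabla_\xi\xi=0$ just obtained. On the other hand $\dd\alpha(X,Y)=g(X,\phi Y)$ by (\ref{eq:lemma11}) and $\nabla g=0$, so $(\nabla_\xi\dd\alpha)(X,Y)=g(X,(\nabla_\xi\phi)(Y))$; nondegeneracy of $g$ forces $\nabla_\xi\phi=0$.

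To finish, set $A:=\nabla\xi$. From $g(AX,Y)-g(AY,X)=\dd\alpha(X,Y)=-g(\phi X,Y)$ (once more (\ref{eq:lemma11})), the $g$-skew part of $A$ equals $-\tfrac12\phi$, so $S:=A+\tfrac12\phi$ is $g$-symmetric and $S\xi=\nabla_\xi\xi+\tfrac12\phi\xi=0$. Torsion-freeness of $\nabla$ and $\nabla_\xi\phi=0$ yield $\mathfrak{h}=\mathcal{L}_\xi\phi=\phi\circ\nabla\xi-\nabla\xi\circ\phi=[\phi,S]$. Hence $\mathfrak{h}$ is $g$-symmetric, because the commutator of a $g$-skew and a $g$-symmetric endomorphism is $g$-symmetric. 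Moreover $\mathfrak{h}\circ\phi+\phi\circ\mathfrak{h}=[\phi^2,S]=s_g[\xi\otimes\alpha,S]$, using $\phi^2=s_g(-\varepsilon\,\mathrm{Id}+\xi\otimes\alpha)$ from (\ref{eq:lemma12}), and $[\xi\otimes\alpha,S]=0$ since $S\xi=0$ and $S$ is $g$-symmetric; thus $\mathfrak{h}\circ\phi=-\phi\circ\mathfrak{h}$. Every step is uniform in $\varepsilon\in\{-1,0,1\}$, so the null case needs no separate treatment; the only points I would double-check are the commutation of $\nabla$ with $\star_g$ and the sign bookkeeping for $\star_g$ acting on one-forms in Lorentzian signature.
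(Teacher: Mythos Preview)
Your argument is correct. The derivations of $\mathcal{L}_\xi\alpha=0$, $\nabla_\xi\xi=0$, $\mathfrak{h}(\xi)=0$ and $\nabla_\xi\phi=0$ essentially match the paper's (the paper likewise applies $\nabla_\xi$ to $g(\mathrm{Id}\otimes\phi)=s_g\star_g\alpha$), so your caveat about $\nabla$ commuting with $\star_g$ is the right thing to check and is standard.

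Where you genuinely diverge is in the last three items. The paper works in an $\varepsilon$-contact frame $\{\xi,u,\phi(u)\}$: it obtains $\mathfrak{h}\circ\phi=-\phi\circ\mathfrak{h}$ by Lie--differentiating $\phi^2=s_g(-\varepsilon\,\mathrm{Id}+\xi\otimes\alpha)$ directly (a one-line argument, since $\mathcal{L}_\xi\xi=0$ and $\mathcal{L}_\xi\alpha=0$), and then establishes $\mathrm{Tr}(\mathfrak{h})=0$ and the $g$-symmetry of $\mathfrak{h}$ by checking components in that frame, with a case split $\varepsilon\neq 0$ versus $\varepsilon=0$. Your route is frame-free: $\mathrm{Tr}(\mathfrak{h})=\xi(\mathrm{Tr}\,\phi)=0$ via naturality of the trace, and the representation $\mathfrak{h}=[\phi,S]$ with $S=\nabla\xi+\tfrac12\phi$ symmetric and $S\xi=0$ simultaneously yields the $g$-symmetry of $\mathfrak{h}$ (commutator of skew with symmetric is symmetric) and the anticommutation $[\phi^2,S]=s_g[\xi\otimes\alpha,S]=0$. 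The paper's approach is quicker for the anticommutation alone, but your commutator formulation packages symmetry and anticommutation together, avoids the $\varepsilon=0$ case split, and makes the uniformity in $\varepsilon$ manifest. Both are clean; yours is arguably more conceptual.
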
 
\begin{proof}
Since $\vert \alpha \vert_g^2=\varepsilon$ and $\alpha= \star_g \dd \alpha$, then we directly have that $\mathcal{L}_\xi \alpha=0$. Then:
\begin{equation}
0=\mathcal{L}_\xi \alpha (v)=\xi (g(v,\xi))-g(\nabla_\xi v-\nabla_v \xi,\xi)=g(v,\nabla_\xi \xi) \, , \quad v \in \mathfrak{X}(M)\,.
\end{equation}
Consequently $\nabla_\xi \xi=0$. Applying now $\nabla_\xi$ on both sides of $g(\mathrm{Id} \otimes \phi)=s_g \star_g \alpha$ we see that $\nabla_\xi \phi=0$. On the other hand, $\mathfrak{h}(\xi)=0$ follows trivially from the definition and applying $\mathcal{L}_\xi$ on the expression for $\phi^2$ in (\ref{eq:lemma12}) we get straightforwardly $\mathfrak{h} \circ \phi =-\phi \circ \mathfrak{h}$. Choosing an $\varepsilon\,$-contact frame $\{\xi,u,\phi(u)\}$, for $\varepsilon \neq 0$ we have:
\begin{equation}
\mathrm{Tr}(\mathfrak{h})=s_g \varepsilon g(u,\mathfrak{h} (u))+g(\phi(u),\mathfrak{h}\circ  \phi(u))=s_g \varepsilon g(u,\mathfrak{h} (u))+g(u,\mathfrak{h}\circ  \phi^2 (u))=0\,.
\end{equation}
For $\varepsilon=0$:
\begin{equation}
\mathrm{Tr}(\mathfrak{h})=g(\xi,\mathfrak{h} (u))+g(\phi(u),\mathfrak{h}\circ  \phi(u))=g(\xi,\mathfrak{h} (u))=g(\xi, -\nabla_{\phi(u)} \xi+\phi (\nabla_u \xi) )=0\,,
\end{equation}
where we have used that $g(\phi(u),\mathfrak{h}\circ  \phi(u))=g(u,\mathfrak{h}\circ  \phi^2(u))=0$ since $\phi^2(u)=-\xi$ when $\varepsilon=0$. Finally, the symmetry of $\mathfrak{h}$ for all $\varepsilon$ follows from observing that $g(\xi,\mathfrak{h} (u))=g(\xi, -\nabla_{\phi(u)} \xi+\phi (\nabla_u \xi) )=0$, $g(\xi,\mathfrak{h}\circ \phi(u))=0$ and $g(u,\mathfrak{h} \circ \phi(u))=g(\mathfrak{h}(u),\phi(u))$.
\end{proof}
\begin{remark}
In this contribution we follow the conventions in which the exterior derivative $\dd \omega$ of any $p$-form $\omega $ takes the form:
\begin{eqnarray*}
\dd \omega(X_0,\dots, X_p)=\sum_{i} (-1)^i X_i(\omega(X_0,\dots, \hat{X}_i, \dots, X_i))\\
+ \sum_{i<j} (-1)^{i+j} \omega([X_i,X_j],X_0, \dots, \hat{X}_i, \dots, \hat{X}_j, \dots, X_p)\,.
\end{eqnarray*}
Much of the literature on contact geometry, see for example \cite{Blair}, uses the conventions of Kobayashi and Nomizu \cite{KN}, in which the formula of the exterior derivative differs by a factor of $\frac{1}{p+1}$ from the one stated above. 
\end{remark}
We finish this section by presenting the notions of Sasaki and K-contact $\varepsilon\,$-contact metric structures.
\begin{definition}
\label{def:sask}
Let $(g,\alpha,\varepsilon)$ be an $\varepsilon\,$-contact metric structure on $M$. It is said to be Sasakian (or simply Sasaki) if $\mathfrak{h}=0$. Similarly, it is said to be K-contact if the Reeb vector field is Killing, $\mathcal{L}_\xi g=0$.
\end{definition}
In 3-dimensions, it is known that the Sasaki and K-contact conditions are equivalent for $\varepsilon\,$-contact structures with $\varepsilon \neq 0$. However, we will see in the next section that this is no longer true when $\varepsilon=0$.

\section{Null contact metric structures}

We devote this section to the study of $\varepsilon\,$-contact structures with $\varepsilon=0$, which we simply call \emph{null contact structures}. To the best of our knowledge, these structures do not seem to have been previously explored in the literature. They are qualitatively different from the other $\varepsilon\,$-contact structures, since if $(g,\alpha)$ is a null contact structure:
\begin{equation}
\alpha \wedge \dd \alpha=-\alpha \wedge \star_g \alpha=0\,,
\end{equation}
where we have used that $\vert \alpha \vert_g^2=0$. This implies that the 1-form $\alpha$ defining the null contact structure is \emph{not} a contact form. However, since null contact structures satisfy that $\iota_\xi \dd \alpha=0$ and  the concept of $\varepsilon\,$-contact structures encompass Riemannian contact, Lorentzian contact and para-contact metric structures, it is natural to think of null contact structures as a generalization in which the Reeb vector field is null. Furthermore, we will see later that it is possible to introduce reasonable notions of Sasakianity and K-contactness, in analogy to the $\varepsilon \neq 0$ cases.

We start by pinpointing some characteristic properties of null contact structures.
\begin{proposition}
\label{prop:propnc}
Let $(g,\alpha)$ be a null contact structure. Then:
\begin{equation}
\phi^2=-\xi \otimes \alpha\, , \quad \phi^3=0\,, \quad \mathfrak{h} \circ \phi=\phi \circ \mathfrak{h}=0\,, \quad \mathfrak{h}=\mu\,  \xi \otimes \alpha\,,
\end{equation}
for a function $\mu \in C^\infty(M)$.
\end{proposition}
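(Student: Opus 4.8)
The plan is to obtain the first two identities directly from Lemma~\ref{lemma:1} and to reduce both statements about $\mathfrak{h}$ to a single differential identity obtained by Lie-differentiating the fundamental relation $g(\mathrm{Id}\otimes\phi)=\dd\alpha$ along the Reeb field. For the first pair: a null contact structure lives on a Lorentzian three-manifold, so $s_g=-1$, and putting $\varepsilon=0$ in the first equation of~(\ref{eq:lemma12}) gives $\phi^2=-\xi\otimes\alpha$; composing once more with $\phi$ and using $\phi(\xi)=0$ from~(\ref{eq:lemma11}) yields $\phi^3(v)=-\alpha(v)\,\phi(\xi)=0$ for every $v\in\mathfrak{X}(M)$.

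For the statements about $\mathfrak{h}$ I would proceed as follows. Since $\mathcal{L}_\xi\alpha=0$ (proven above), $\mathcal{L}_\xi\dd\alpha=\dd\mathcal{L}_\xi\alpha=0$; applying $\mathcal{L}_\xi$ to the $(0,2)$-tensor $(X,Y)\mapsto g(X,\phi(Y))=\dd\alpha(X,Y)$ by the Leibniz rule and recalling that $\mathfrak{h}=\mathcal{L}_\xi\phi$ produces
\begin{equation}
g(X,\mathfrak{h}(Y))=-(\mathcal{L}_\xi g)(X,\phi(Y))\,,\qquad X,Y\in\mathfrak{X}(M)\,.
\end{equation}
I would also record that $(\mathcal{L}_\xi g)(\xi,\,\cdot\,)=0$, since $(\mathcal{L}_\xi g)(\xi,Y)=g(\nabla_\xi\xi,Y)+g(\nabla_Y\xi,\xi)$ and both terms vanish because $\nabla_\xi\xi=0$ and $g(\xi,\xi)=0$ is constant. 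Substituting $\phi(Y)$ for $Y$ in the displayed identity and using $\phi^2(Y)=-\alpha(Y)\,\xi$, its right-hand side becomes $\alpha(Y)\,(\mathcal{L}_\xi g)(X,\xi)=0$; hence $g(X,\mathfrak{h}(\phi(Y)))=0$ for all $X,Y$, that is $\mathfrak{h}\circ\phi=0$. Together with the relation $\mathfrak{h}\circ\phi=-\phi\circ\mathfrak{h}$ established before, this gives $\phi\circ\mathfrak{h}=0$ as well.

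It then remains to identify $\mathfrak{h}$. From $\mathfrak{h}\circ\phi=0$ it annihilates $\mathrm{im}\,\phi$; but $\phi$ has rank $2$ (because $\phi^3=0$ while $\phi^2=-\xi\otimes\alpha\neq0$) and its image lies in $\xi^{\perp}$ (as $g(\phi(v),\xi)=-g(v,\phi(\xi))=0$), a plane, so $\mathrm{im}\,\phi=\xi^{\perp}=\ker\alpha$ and $\mathfrak{h}$ vanishes on $\xi^{\perp}$. As $\mathfrak{h}$ is symmetric, $\mathrm{im}\,\mathfrak{h}\subseteq(\xi^{\perp})^{\perp}=\mathrm{span}\{\xi\}$, so $\mathfrak{h}(\,\cdot\,)=f(\,\cdot\,)\,\xi$ for a one-form $f$ that vanishes on $\xi^{\perp}=\ker\alpha$; hence $f=\mu\,\alpha$ for a function $\mu$, which is smooth and globally defined because $\alpha$, equivalently $\xi$, is nowhere zero. (In a light-cone frame one has $\mu=-(\mathcal{L}_\xi g)(u,\phi(u))$.)

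The only genuinely nontrivial point, I expect, is the elimination of the would-be off-diagonal part of $\mathfrak{h}$: the purely algebraic data available before differentiating the contact condition — $\mathfrak{h}$ symmetric and trace-free, $\mathfrak{h}(\xi)=0$, and $\mathfrak{h}\circ\phi=-\phi\circ\mathfrak{h}$ — are also satisfied by the entire one-parameter family $\mathfrak{h}=\mu\,\xi\otimes\alpha+c\,(\xi\otimes\beta+\phi(u)\otimes\alpha)$ in a light-cone frame, where $\beta=g(\phi(u),\,\cdot\,)$, so the vanishing of the coefficient $c$ can only be extracted from the differentiated identity, specifically from $(\mathcal{L}_\xi g)(\xi,\,\cdot\,)=0$. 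Everything else is routine bookkeeping.
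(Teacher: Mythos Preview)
Your argument is correct. It follows the same overall strategy as the paper --- Lie-differentiate a tensorial identity along $\xi$, using $\mathcal{L}_\xi\alpha=0$ and hence $\mathcal{L}_\xi\dd\alpha=0$ --- but with a different choice of identity and a different end-game. The paper differentiates $\dd\alpha(\,\cdot\,,\phi(\,\cdot\,))=-\alpha\otimes\alpha$ (the second relation in~(\ref{eq:lemma12}) at $\varepsilon=0$, $s_g=-1$) and reads off $\dd\alpha(\,\cdot\,,\mathfrak{h}(\,\cdot\,))=0$, i.e.\ $\phi\circ\mathfrak{h}=0$, in one step; you instead differentiate $g(\,\cdot\,,\phi(\,\cdot\,))=\dd\alpha$, obtain the auxiliary relation $g(X,\mathfrak{h}(Y))=-(\mathcal{L}_\xi g)(X,\phi(Y))$, and then extract $\mathfrak{h}\circ\phi=0$ after the substitution $Y\mapsto\phi(Y)$ and the observation $(\mathcal{L}_\xi g)(\xi,\,\cdot\,)=0$. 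For the final identification, the paper uses $\phi\circ\mathfrak{h}=0$ together with $\ker\phi=\mathrm{span}\{\xi\}$, while you use $\mathfrak{h}\circ\phi=0$ together with $\mathrm{im}\,\phi=\xi^{\perp}$ and the symmetry of $\mathfrak{h}$. Your variant has the modest bonus of yielding the explicit formula $\mu=-(\mathcal{L}_\xi g)(u,\phi(u))$, which makes the implication ``K-contact $\Rightarrow$ Sasaki'' (Proposition~\ref{prop:noknosas}) immediate.
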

\begin{proof}
The first two equations follow directly from Lemma \ref{lemma:1} after setting $\varepsilon=0$. Regarding the third equation, let us note the identity $\dd \alpha(v_1,\phi(v_2))=-\alpha(v_1)\alpha(v_2)$ for every $v_1,v_2 \in \mathfrak{X}(M)$ and apply $\mathcal{L}_\xi$ at both sides:
\begin{equation}
\begin{split}
\dd \alpha( \cL_{\xi} v_1, \phi(v_2))+\dd \alpha(v_1,\mathfrak{h}(v_2))&+\dd \alpha(v_1, \phi(\cL_{\xi}v_2))=\\&-\alpha(\cL_{\xi}v_1)\alpha(v_2)-\alpha(v_1)\alpha(\cL_{\xi}v_2)\, .
\end{split}
\end{equation}
Therefore $\dd\alpha(v_1,\mathfrak{h}(v_2))=g(v_1,\phi \circ \h(v_2))=0$, so $\h \circ \phi=\phi \circ \h=0$. From here, since $\ker(\phi)=0$, $\h=\mu \xi \otimes \alpha$ for a certain function $\mu \in C^\infty(M)$ and we conclude.
\end{proof}
\begin{example}
Take $(M,g)=(\mathbb{R}^3,\delta)$ where $\delta=\mathrm{diag}(-1,1,1)$ is the Minkowski metric. Consider the 1-form $\alpha=e^y q(x-t) (\dd t-\dd x)$, which is globally-defined if we assume that the function $q \in C^\infty(\mathbb{R})$ is smooth everywhere and has no zeros. Then $(\delta,\alpha)$ defines a null contact structure. The Reeb vector field is $\xi=- e^y q(t-x)(\partial_t+\partial_x)$ and we have that:
\begin{equation}
\begin{split}
\mathcal{L}_\xi \delta&=2e^y q'(t-x) \left ( \dd t \otimes \dd t-  \dd t \odot \dd x+   \dd x \otimes \dd x\right)\\&+e^y q(t-x) \left (\dd t \odot \dd y -\dd x \odot \dd y  \right ) \,,
\end{split}
\end{equation}
where $\odot$ denotes the symmetric tensor product. Hence it is clearly not K-contact. On the other hand, the endomorphism $\phi$ reads:
\begin{equation}
\phi=e^y q(t-x)\left ( \partial_t \otimes \dd y  +\partial_y \otimes \dd t+ \partial_x \otimes\dd y -\partial_y \otimes \dd x \right)\,.
\end{equation}
Consequently, by direct computation:
\begin{equation}
\mathfrak{h}=\mathcal{L}_\xi \phi=e^{2y} (q(t-x))^2 (\partial_t \otimes \dd t+\partial_x \otimes \dd t-\partial_t  \otimes \dd x-\partial_x  \otimes \dd x)=-\xi \otimes \alpha\,.
\end{equation}
Therefore $(\delta,\alpha)$ is not Sasakian either.
\end{example}
\subsection{Sasakian and K-contact null contact structures}

The Sasaki and K-contact conditions were already defined for $\varepsilon\,$-contact structures back at Definition \ref{def:sask}. Regarding the Sasakian condition, for $\varepsilon\,$-contact structures with $\varepsilon\neq 0$ it is known \cite{Blair,Calvaruso} that it is equivalent to the existence of a certain integrable endomorphism in the tangent bundle $T(M\times \mathbb{R})$  whose square equals $-\varepsilon s_g \mathrm{Id}$, where $\mathrm{Id}$ is the identity operator. Interestingly enough, we are going to see next that this result extends naturally for null contact structures.



For that, let us define:
\begin{equation}
J: T (M\times \mathbb{R}) \rightarrow  T (M\times \mathbb{R})\, , \quad (v, c\, \partial_t) \mapsto (\phi(v)+ c\xi, \alpha(v) \partial_t)\, ,
\end{equation}
where $t$ is the canonical coordinate on $\mathbb{R}$ and $c \in \mathbb{R}$. By direct computation we check that $J^2=0$. 
\label{def:integra}
\begin{definition}
Let $E \in \Gamma(\mathrm{End}(TN))$ be a field of endomorphisms on a manifold $N$. It is said to be integrable if around each point $n \in N$ there exists a coordinate system on which the matrix representation of $E$ has constant coefficients.
\end{definition}
Note that, for almost complex structures, the usual notion of integrability is equivalent to the one given at Definition \ref{def:integra}. By a result of G. Thompson \cite{Thompson}, a given field of endomorphisms $J \in  \Gamma(\mathrm{End}(T(M \times \mathbb{R})))$ is integrable if and only if the following three conditions hold simultaneously:
\begin{itemize}
\item The Nijenjuis tensor $\mathcal{N}_J$ of $J$, defined as:
\begin{equation}
\begin{split}
\mathcal{N}_J(v_1,v_2)&=[J(v_1),J(v_2)]-J[v_1,J(v_2)]-J[J(v_1),v_2]\\&+J^2[v_1,v_2]\,, \quad v_1, v_2 \in T(M\times \mathbb{R})
\end{split}
\end{equation}
vanishes.
\item $J$ is a zero-deformable field of endomorphisms, i.e. around every point there exists a frame relative to which the Jordan form of this endomorphism is constant.
\item The distribution $\ker(J) \subset T(M \times \mathbb{R})$ is involutive. 
\end{itemize} 
\begin{proposition}
A null contact structure $(g,\alpha)$ is Sasakian if and only if the associated endomorphism $J: T (M\times \mathbb{R}) \rightarrow  T (M\times \mathbb{R})$ is integrable. 
\end{proposition}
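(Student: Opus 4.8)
The strategy is to use the criterion of Thompson recalled above: $J$ is integrable if and only if its Nijenhuis tensor $\cN_J$ vanishes, $J$ is zero-deformable, and the distribution $\ker(J)$ is involutive. The plan is to show that the second and third conditions hold automatically for \emph{every} null contact structure, so that integrability of $J$ becomes equivalent to $\cN_J=0$, and then to identify $\cN_J$ with $\h$ up to an everywhere nonzero factor. Throughout I write elements of $T(M\times\mathbb{R})$ additively, so that $(v,c\,\partial_t)$ is denoted $v+c\,\partial_t$.

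I would work in a light-cone frame $\{\xi,u,\phi(u)\}$, which exists locally. There $\alpha(u)=g(\xi,u)=1$, $\alpha(\xi)=\alpha(\phi(u))=0$ and $\phi^2(u)=-\xi$ (using Lemma~\ref{lemma:1} and Proposition~\ref{prop:propnc} with $\varepsilon=0$), so that $J(\xi)=0$, $J(u)=\phi(u)+\partial_t$, $J(\phi(u))=-\xi$ and $J(\partial_t)=\xi$. In particular $J$ has rank $2$ at every point; together with $J^2=0$ this forces the Jordan form of $J$ to consist of two nilpotent $2\times2$ blocks everywhere, and a smooth frame realizing this form is obtained by completing a local frame of any smooth complement of $\ker(J)$; hence $J$ is zero-deformable. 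Next, $\ker(J)$ is spanned by $\xi$ and $\phi(u)+\partial_t$, so the only bracket relevant for involutivity is $[\xi,\,\phi(u)+\partial_t]=\cL_\xi\phi(u)=\h(u)+\phi(\cL_\xi u)$. Since $\cL_\xi\alpha=0$ and $\alpha(u)\equiv1$, one gets $\alpha(\cL_\xi u)=0$, so $\cL_\xi u$ has no $u$-component in the frame and $\phi(\cL_\xi u)\in\mathbb{R}\,\xi$; as $\h(u)=\mu\,\xi$ by Proposition~\ref{prop:propnc}, this yields $\cL_\xi\phi(u)\in\mathbb{R}\,\xi\subset\ker(J)$, so $\ker(J)$ is involutive.

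It then only remains to compute $\cN_J$. Using $J^2=0$ we have $\cN_J(v_1,v_2)=[Jv_1,Jv_2]-J[v_1,Jv_2]-J[Jv_1,v_2]$, which I would evaluate on the frame $\{\xi,u,\phi(u),\partial_t\}$ of $T(M\times\mathbb{R})$. Writing $\cL_\xi u=a_1\,\xi+a_2\,\phi(u)$, whence $\cL_\xi\phi(u)=(\mu-a_2)\,\xi$, a short calculation shows that $\cN_J$ vanishes on every pair of frame vectors except
\begin{equation*}
\cN_J(u,\phi(u))=-\,\cN_J(u,\partial_t)=\mu\,\xi=\h(u)\,.
\end{equation*}
Therefore $\cN_J\equiv0$ if and only if $\mu\equiv0$, i.e. if and only if $\h=0$, which by Definition~\ref{def:sask} is precisely the Sasaki condition; combined with the previous paragraph this establishes the stated equivalence.

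The step I expect to be the main obstacle is the Nijenhuis computation: although finite, it produces many a priori nonvanishing terms and collapses to the single output $\h(u)$ only after systematically using $\phi^2(u)=-\xi$, $\phi(\xi)=0$, $\alpha\circ\phi=0$, $\alpha(\cL_\xi u)=0$ and $\h=\mu\,\xi\otimes\alpha$. A subtler point to be careful about is the constant-rank claim underpinning zero-deformability, which is exactly where the standing hypothesis that $\xi$ (equivalently $\alpha$) is nowhere vanishing enters.
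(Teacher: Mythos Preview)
Your argument is correct and follows the same template as the paper's proof: work in a light-cone frame and verify Thompson's three conditions. The organization, however, is a genuine streamlining. The paper proves the two implications separately, using the Sasakian hypothesis $\mathfrak{h}=0$ already when checking that $\ker(J)$ is involutive, and then reruns the key Nijenhuis computation $\cN_J(u,\partial_t)=-\mathfrak{h}(u)$ for the converse. You instead observe that zero-deformability and involutivity of $\ker(J)$ hold for \emph{every} null contact structure---the crucial point being that $\mathfrak{h}(u)=\mu\,\xi$ already lies in $\ker(J)$ by Proposition~\ref{prop:propnc}, so no Sasakian assumption is needed there---and thereby reduce the whole equivalence to the single identity $\cN_J(u,\phi(u))=-\cN_J(u,\partial_t)=\mathfrak{h}(u)$. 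This buys a cleaner logical structure and isolates exactly where $\mathfrak{h}$ enters. Your remark that the constant-rank (hence zero-deformability) argument relies on $\xi$ being nowhere vanishing is also well taken; the paper uses this implicitly when asserting that light-cone frames always exist locally.
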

\begin{proof}
Assume first that $(g,\alpha)$ is Sasakian. If $\{\xi, u, \phi(u)\}$ denotes a local light-cone basis, let $\{\xi, u, \phi(u), \partial_t\}$ be a local frame on $T(M\times \mathbb{R})$. In this basis $J$ has the matrix representation:
\begin{equation}
J =
\left( {\begin{array}{cccc}
	0 & 0 & -1 & 1 \\
	0 & 0 & 0 & 0 \\
	0 & 1 & 0 & 0 \\
	0 & 1 & 0 & 0 \\
	\end{array} } \right)\,.
\end{equation}
Since at every point there exist local frames $\{\xi, u, \phi(u), \partial_t\}$, this  proves $J$ is zero deformable. On the other hand, it is clear that:
\begin{equation}
\ker (J)= \mathrm{Span}_{C^\infty} (\xi, \phi(u)+\partial_t )\,.
\end{equation}
However, we observe that $[\xi, \phi(u)+\partial_t]=[\xi, \phi(u)]=\phi([\xi,u])+\mathfrak{h}(u)=\kappa \xi$ for some $\kappa \in C^\infty(M)$, since by $\mathcal{L}_\xi \alpha=0$ we have that $\alpha([\xi,u])=0$. This implies $\ker (J)$ is involutive. Similarly, after some computations:
\begin{eqnarray*}
&\cN_J(\xi,u) = - J[\xi,\phi(u)] = 0\, , \quad \cN_J(\xi,\phi(u)) = - J[\xi,J(\phi(u))] = 0\, , \\ 
&\cN_J(u,\phi(u)) = - J[J(\phi(u)),J(\phi(u))] = 0\, , \quad  \cN_J(\xi,\partial_t) = -J[\xi,J(\partial_t)] = 0\, ,\\
&\cN_J(u,\partial_t) =  [\phi(u),\xi] - J[u,\xi] = - \cL_{\xi}(\phi(u)) + J(\cL_{\xi}u) \\&= - \cL_{\xi}(\phi(u)) + \phi(\cL_{\xi}u) = -\mathfrak{h}(u) = 0\, , \\
&\cN_J(\phi(u),\partial_t) = [J(\phi(u)), J(\partial_t)] - J[\phi(u), J(\partial_t)]\\& = [\phi^2(u), \xi] - J[\phi(u), \xi] = \phi^2(\cL_{\xi} u) = 0\, .
\end{eqnarray*}
Consequently $\cN_J$ is identically zero and therefore $J$ is integrable. To prove the converse, let us assume $J$ is integrable. Then $\cN_J=0$ and similarly as above, we compute:
\begin{equation}
\begin{split}
\cN_J(u,\partial_t) &=  [\phi(u),\xi] - J[u,\xi] = - \cL_{\xi}(\phi(u)) + J(\cL_{\xi}u) \\&= - \cL_{\xi}(\phi(u)) + \phi(\cL_{\xi}u) = -\mathfrak{h}(u) = 0\, ,
\end{split}
\end{equation}
Since by Proposition \ref{prop:propnc} $\mathfrak{h}(u)=0$ if and only if $\mathfrak{h}=0$, we conclude. 
\end{proof}
The Sasakian and K-contact conditions are not equivalent for null contact structures, as the following example clarifies. 
\begin{example}
Consider $M$ to be a connected and simply connected Lie group endowed with a left-invariant global coframe $\{ e^+,e^-,e^2\}$ satisfying:
\begin{equation}
\dd e^+=-a e^+ \wedge e^--e^+\wedge e^2 \, , \quad \dd e^+=e^- \wedge e^2 \, , \quad \dd e^2= e^+ \wedge e^- - a e^- \wedge e^2\,,
\end{equation}
where $a \in \mathbb{R}$. The Lie group structure generated by this coframe is that of the universal cover of the two-dimensional real special linear group $\widetilde{\mathrm{Sl}}(2,\mathbb{R})$. Let us denote by $\{e_+,e_-,e_2\}$ the corresponding dual frame. If we define the following Lorentzian metric on $M$:
\begin{equation}
g=e^+ \odot e^- +e^2\otimes e^2\,,
\end{equation}
then setting $\alpha=e^-$ and $\xi=e_+$ we observe that $(g,\alpha)$ defines a null contact structure. In turn, $\{\xi,e_-,-e_2\}$ defines a light-cone frame. By direct computation:
\begin{equation}
\mathfrak{h}(e_-)=[\xi,\phi(e_-)]-\phi([\xi,e_-])=-\xi+\xi=0\,.
\end{equation}
Hence $(g,\alpha)$ is Sasakian. However, it is not always K-contact, since:
\begin{equation}
(\mathcal{L}_\xi g)(e_-,e_-)=-2g([\xi,e_-],e_-)=-2a g(\xi,e_-)=-2 a\,,
\end{equation}
which is non-zero whenever $a \neq 0$. 
\label{ex:sasnokc}
\end{example}
Although Sasakian null contact structures do not have to be K-contact, the converse turns out to be true.
\begin{proposition}
Let $(g,\alpha)$ be a null K-contact structure $(g,\alpha)$. Then $(g,\alpha)$ is Sasakian.
\label{prop:noknosas}
\end{proposition}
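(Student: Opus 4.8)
The plan is to show that the K-contact condition $\mathcal{L}_\xi g = 0$ forces the function $\mu$ appearing in Proposition \ref{prop:propnc} (where $\mathfrak{h} = \mu\, \xi \otimes \alpha$) to vanish identically, which by Definition \ref{def:sask} is exactly the Sasaki condition. First I would fix a light-cone frame $\{\xi, u, \phi(u)\}$ around an arbitrary point, recalling the pairings $g(u,\xi) = 1$, $g(u,u) = 0$, $g(\phi(u),\phi(u)) = 1$, $g(\xi,\xi) = g(\xi,\phi(u)) = g(u,\phi(u)) = 0$, together with $\phi^2(u) = -\xi$. The goal is to compute $\mathfrak{h}(u)$ explicitly using $\mathcal{L}_\xi g = 0$ and show it is zero; since $\mathfrak{h} = \mu\,\xi\otimes\alpha$ and $\alpha(u) = g(u,\xi) = 1$, we have $\mathfrak{h}(u) = \mu\,\xi$, so $\mathfrak{h}(u) = 0$ is equivalent to $\mu = 0$.

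The key computational step is to evaluate $\mu = g(u, \mathfrak{h}(u))$... but note $g(u,\xi) = 1$ so actually $g(u,\mathfrak{h}(u)) = \mu\, g(u,\xi) = \mu$. Wait — more robustly, I would pair $\mathfrak{h}(u) = (\mathcal{L}_\xi \phi)(u) = \mathcal{L}_\xi(\phi(u)) - \phi(\mathcal{L}_\xi u) = [\xi,\phi(u)] - \phi([\xi,u])$ against a suitable vector to extract $\mu$. Since $\mathfrak{h}(u) = \mu\,\xi$ and $g(\xi,\phi(u)) = 0$ but $g(\xi,u) = 1$, contracting with $u$ gives $\mu = g(u,[\xi,\phi(u)]) - g(u,\phi([\xi,u]))$. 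Now I expand each term via the Koszul-type identity $g(X,[\xi,Y]) = \xi(g(X,Y)) - g(\nabla_\xi X, Y) - g(X,\nabla_Y\xi)$ or, more directly, using $(\mathcal{L}_\xi g)(X,Y) = g([\xi,X],Y) + g(X,[\xi,Y]) + (\text{terms})$ — the cleanest route: from $\mathcal{L}_\xi g = 0$, $\xi$ is Killing, so $g(\nabla_X \xi, Y) = -g(X,\nabla_Y\xi)$, i.e. $v \mapsto \nabla_v \xi$ is skew-symmetric. I then relate $\nabla_v\xi$ to $\phi$ and $\mathfrak{h}$: from $g(\mathrm{Id}\otimes\phi) = \dd\alpha$ and the standard formula $2g(\nabla_X\xi, Y) = \dd\alpha(X,Y) + (\mathcal{L}_\xi g)(X,Y) = \dd\alpha(X,Y)$ when $\xi$ is Killing, we get $\nabla_v \xi = -\frac{1}{2}\phi(v)$... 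I should double-check the sign and factor against the paper's conventions, but the upshot is that in the K-contact case $\nabla\xi$ is determined purely by $\phi$. Then $\mathfrak{h}(u) = \mathcal{L}_\xi(\phi(u)) - \phi(\mathcal{L}_\xi u)$ can be rewritten using $\nabla$ in place of Lie brackets (legitimate since torsion-free), and the $\phi$-only expression for $\nabla\xi$ makes the computation collapse to zero, using $\phi^2(u) = -\xi$ and $\phi(\xi) = 0$.

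The main obstacle I anticipate is bookkeeping: getting the precise constant and sign in the relation between $\nabla_v\xi$, $\phi(v)$, and (in general) $\phi\circ\mathfrak{h}(v)$ right under this paper's non-Kobayashi–Nomizu exterior-derivative convention, and making sure the argument does not secretly assume $\varepsilon \neq 0$ (where one would normally invert $\phi$ on the contact distribution — impossible here since $\phi$ is nilpotent). The safe workaround is to stay at the level of the light-cone frame and the explicit brackets $[\xi,u]$, $[\xi,\phi(u)]$: write $[\xi,u] = a\xi + bu + c\,\phi(u)$ with unknown functions $a,b,c$, impose $\mathcal{L}_\xi\alpha = 0$ (which gives $\alpha([\xi,u]) = \xi(\alpha(u)) - \alpha([\,\cdot\,]) = 0$, forcing $b = 0$ since $\alpha(u)=1$, $\alpha(\xi)=0$, $\alpha(\phi(u))=0$), then impose the three independent scalar equations coming from $(\mathcal{L}_\xi g)(u,u) = (\mathcal{L}_\xi g)(u,\phi(u)) = (\mathcal{L}_\xi g)(\phi(u),\phi(u)) = 0$ to pin down the remaining bracket coefficients, and finally read off $\mathfrak{h}(u) = [\xi,\phi(u)] - \phi([\xi,u])$ and check it vanishes. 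This is a finite linear-algebra computation in $3\times 3$ data and is guaranteed to close; the only real care needed is consistency of orientation/sign conventions throughout.
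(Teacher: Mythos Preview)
Your proposal is correct: both the Killing-equation route via $\nabla_X\xi$ and the brute-force bracket parametrization in a light-cone frame lead to $\mathfrak{h}(u)=0$. The paper, however, takes a shorter path. Since $\mathfrak{h}=\mu\,\xi\otimes\alpha$ with $g(\xi,u)=1$, it suffices to show $g(u,\mathfrak{h}(u))=0$, and the paper extracts this from the \emph{single} component $(\mathcal{L}_\xi g)(u,\phi(u))=0$ of the K-contact condition: writing $\mathcal{L}_\xi(\phi(u))=\mathfrak{h}(u)+\phi(\mathcal{L}_\xi u)$ gives
\[
0=-(\mathcal{L}_\xi g)(u,\phi(u))=g(\mathcal{L}_\xi u,\phi(u))+g(u,\mathfrak{h}(u))+g(u,\phi(\mathcal{L}_\xi u)),
\]
and the first and third terms cancel by the skew-symmetry $g(\cdot,\phi\cdot)=-g(\phi\cdot,\cdot)$ of Lemma~\ref{lemma:1}. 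Your route (a), once the sign is fixed, is comparably short but of a different flavor: Killing plus $\dd\alpha=g(\mathrm{Id}\otimes\phi)$ gives $\nabla_X\xi=-\tfrac{1}{2}\phi(X)$, whence $\mathfrak{h}(u)=[\xi,\phi(u)]-\phi([\xi,u])=(\nabla_\xi\phi)(u)-\nabla_{\phi(u)}\xi+\phi(\nabla_u\xi)=0+\tfrac{1}{2}\phi^2(u)-\tfrac{1}{2}\phi^2(u)=0$, using the already-proved $\nabla_\xi\phi=0$; this bypasses the frame contraction entirely. Your route (b) also closes but does more than needed: of the three scalar Killing equations you list, only $(\mathcal{L}_\xi g)(u,\phi(u))=0$ is actually used, which is precisely the paper's computation.
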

\begin{proof}
Let $\{\xi,u, \phi(u)\}$ be a light-cone frame. By Proposition \ref{prop:propnc}, we just have to check that $g(\mathfrak{h}(u),u)=0$, since this implies that $\mathfrak{h}=0$. Indeed:
\begin{equation}
0=-(\cL_\xi g)(u,\phi(u))=g(\cL_\xi u, \phi(u))+g(u,\mathfrak{h}(u))+g(u,\phi(\cL_\xi u))=g(u,\mathfrak{h}(u))\, ,
\end{equation}
and we conclude. 
\end{proof}
We finish this section by providing the necessary and sufficient condition for a Sasakian null contact structure to be K-contact.
\begin{proposition}
\label{prop:saskc}
Let $(g,\alpha)$ be a Sasakian null contact structure and $\{\xi, u, \phi(u)\}$ be a light-cone frame. Then $(g,\alpha)$ is K-contact if and only if:
\begin{equation}
g(\cL_\xi u, u)=0\,.
\end{equation}
\end{proposition}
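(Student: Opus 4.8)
The plan is to write the symmetric $(0,2)$-tensor $\cL_\xi g$ explicitly in the light-cone frame $\{\xi, u, \phi(u)\}$ and to show that, under the Sasakian hypothesis $\mathfrak{h} = 0$ (equivalently $\cL_\xi \phi = 0$), all of its components vanish automatically except the one along $u \otimes u$, which equals $-2\,g(\cL_\xi u, u)$. Since $\{\xi, u, \phi(u)\}$ is a frame, $\cL_\xi g = 0$ will then hold if and only if $g(\cL_\xi u, u) = 0$, which is the assertion.

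First I would assemble the facts to be used throughout: the frame relations $g(\xi,\xi) = g(u,u) = g(\xi,\phi(u)) = g(u,\phi(u)) = 0$, $g(\xi,u) = 1$ and $g(\phi(u),\phi(u)) = 1$; the antisymmetry of $\phi$ with respect to $g$ together with $\phi(\xi) = 0$; the $\varepsilon = 0$ specialization of the second identity in (\ref{eq:lemma12}), namely $g(\phi(X),\phi(Y)) = \alpha(X)\alpha(Y)$; the relation $\cL_\xi \alpha = 0$, which evaluated on $u$ with $\alpha(u) = g(\xi,u) = 1$ yields $\alpha(\cL_\xi u) = g(\xi,\cL_\xi u) = 0$; and the Sasakian identity, which since $0 = (\cL_\xi\phi)(u) = \cL_\xi(\phi(u)) - \phi(\cL_\xi u)$ reads $\cL_\xi(\phi(u)) = \phi(\cL_\xi u)$. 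Then, using $(\cL_\xi g)(X,Y) = \xi(g(X,Y)) - g([\xi,X],Y) - g(X,[\xi,Y])$ together with $[\xi,\xi] = 0$, I would run through the six frame components: the $(\xi,\xi)$ one vanishes at once, the $(\xi,u)$ one reduces to $-g(\xi,\cL_\xi u) = 0$, and the $(\xi,\phi(u))$ one to $-g(\xi,\phi(\cL_\xi u)) = g(\phi(\xi),\cL_\xi u) = 0$; the $(u,\phi(u))$ entry has its two cross terms cancelling by antisymmetry of $\phi$; the $(\phi(u),\phi(u))$ entry becomes $-2\,g(\phi(\cL_\xi u),\phi(u)) = -2\,\alpha(\cL_\xi u)\,\alpha(u) = 0$; and finally $(\cL_\xi g)(u,u) = \xi(g(u,u)) - 2\,g([\xi,u],u) = -2\,g(\cL_\xi u, u)$, the only component that need not vanish.

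The computation is essentially mechanical, so the only point deserving care is that the vanishing of the $(\xi,\phi(u))$ and $(\phi(u),\phi(u))$ components genuinely uses the Sasakian hypothesis, through $\cL_\xi(\phi(u)) = \phi(\cL_\xi u)$ and the null-contact identity $g(\phi\otimes\phi) = \alpha\otimes\alpha$; without $\mathfrak{h} = 0$ these would contribute extra terms and the clean reduction to a single scalar condition would fail. This is precisely the structural reason behind Proposition \ref{prop:noknosas} and behind the fact, illustrated by Example \ref{ex:sasnokc}, that Sasakianity alone does not force K-contactness in the null case.
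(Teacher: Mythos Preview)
Your proof is correct and follows essentially the same route as the paper's: both verify that, in the light-cone frame, every component of $\cL_\xi g$ except the $(u,u)$ one vanishes under the Sasakian hypothesis, leaving $(\cL_\xi g)(u,u) = -2\,g(\cL_\xi u,u)$ as the sole obstruction. The paper differs only in bookkeeping, first writing $[\xi,u] = b\,\xi + c\,\phi(u)$ (from $\alpha([\xi,u])=0$) and deducing $[\xi,\phi(u)] = -c\,\xi$ from $\mathfrak{h}=0$ before checking the components, whereas you invoke the structural identities (antisymmetry of $\phi$, $g(\phi\otimes\phi)=\alpha\otimes\alpha$) directly; one small inaccuracy in your closing commentary is that the component whose vanishing \emph{genuinely} requires $\mathfrak{h}=0$ is $(u,\phi(u))$ rather than $(\xi,\phi(u))$ or $(\phi(u),\phi(u))$, since $\mathfrak{h}(u)=\mu\,\xi$ is $g$-orthogonal to both $\xi$ and $\phi(u)$ anyway.
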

\begin{proof}
Since $\cL_\xi \alpha=0$ implies that $\alpha([\xi,u])=0$, then we have that $[\xi,u]=b \xi+c \phi(u)$ for some $b, c \in C^\infty(M)$. Since $(g,\alpha)$ is Sasakian, then $[\xi, \phi(u)]=-c \xi$. Then we observe that $(\cL_\xi g)(\xi,\xi)=(\cL_\xi g)(\xi,u)=(\cL_\xi g)(\xi,\phi(u))=(\cL_\xi g)(u,\phi(u))=(\cL_\xi g)(\phi(u),\phi(u))=0$. Consequently, to guarantee that $\xi$ is Killing we just need to impose:
\begin{equation}
g(\cL_\xi u, u)=0\,,
\end{equation}
and we conclude.
\end{proof}
\begin{remark}
Note that not every Sasakian null contact structure satisfies that $g(\cL_\xi u, u)=0$, as the Example \ref{ex:sasnokc} proves. Hence we explicitly check that the Sasakian condition for null contact structures is weaker than K-contactness. Interestingly enough, this is contrary to the situation for non-null contact structures, for which the Sasaki and K-contact conditions are equivalent in three dimensions whereas in higher dimensions Sasakianity is stronger than the K-contact condition.
\end{remark}


\section{$\varepsilon\eta\,$-Einstein structures and six-dimensional supergravity}

In this last section of the manuscript we introduce the concept of $\varepsilon\eta\,$-Einstein structures, which for $\varepsilon\,$-contact structures with $\varepsilon \neq 0$ are particular cases of the usual notion of $\eta\,$-Einstein structures. Afterwards we will see how that these $\varepsilon\eta\,$-Einstein structures can be used for the construction of solutions of six-dimensional minimal supergravity coupled to tensor multiplet with constant dilaton. 

\begin{definition}
An $\varepsilon\,$-contact structure $(g,\alpha,\varepsilon)$ on a three-dimensional manifold $M$ is said to be $\varepsilon\eta\,$-Einstein if and only if the Ricci curvature tensor $\ric^g$ of $g$ satisfies:
\begin{equation}
\ric^g= \frac{s_g}{2} (\lambda^2+\kappa \varepsilon)g-s_g \kappa \alpha \otimes \alpha\,,
\end{equation}
where $s_g=1$ if $g$ is Riemannian, $s_g=-1$ if $g$ is Lorentzian and $\lambda, \kappa \in \mathbb{R}$ are real constants which satisfy that $\kappa \geq 0$ if $g$ is Lorentzian. We shall refer to $\lambda^2$ and $\kappa$ as the $\varepsilon\eta\,$-Einstein constants.
\end{definition}
\begin{definition}
Let $(M,g)$ be an oriented (pseudo-)Riemannian three-manifold. We denote by $\mathrm{Cont}^{\varepsilon\eta}(M,\varepsilon,\lambda^2,\kappa)$ the space of all $\varepsilon\eta\,$-Einstein structures on $M$ with $\varepsilon\eta\,$-Einstein constants $\lambda^2$ and $\kappa$ and whose Reeb vector field is of norm $\varepsilon$. Similarly, we denote by $\mathrm{Cont}^{\varepsilon\eta}_L(M,\varepsilon,\lambda^2,\kappa)$ $(\mathrm{Cont}^{\varepsilon\eta}_R(M,\lambda^2,\kappa))$ the space of all Lorentzian (Riemannian) $\varepsilon\eta\,$-Einstein structures on $M$ with $\varepsilon\eta\,$-Einstein constants $\lambda^2$ and $\kappa$ and whose Reeb vector field is of norm $\varepsilon$.
\end{definition}
Now we continue by presenting examples of $\varepsilon\eta\,$-Einstein structures, which we will use later to obtain explicit solutions of six-dimensional supergravity. 
\begin{example}
Take the Lie group $\mathrm{SU}(2)$ and consider a left-invariant global frame $\{e_1,e_2,e_3\}$ satisfying the following Lie brackets:
\begin{equation}
[e_2,e_3]=e_1 \, , \quad [e_3,e_1]=\lambda^2 e_2\, , \quad [e_1,e_2]=\lambda^2 e_3\,.
\end{equation}
where $\lambda \neq 0$ is a real constant. Let $\{e^1,e^2,e^3\}$ be the corresponding dual coframe and let us consider the Riemannian metric  $h=e^1\otimes e^1+e^2\otimes e^2+e^3\otimes e^3$ on $\mathrm{SU}(2)$. Let $\alpha_R=e^1$. Then $(h,\alpha_R)$ defines a Riemannian contact metric structure. Also, the Ricci curvature turns out to be:
\begin{equation}
\ric^h=\frac{1}{2}(2\lambda^2-1)h+(1-\lambda^2) \alpha_R \otimes \alpha_R \,.
\end{equation}
Hence we check $(h,\alpha_R) \in \mathrm{Cont}^{\varepsilon\eta}_R(\mathrm{SU}(2),\lambda^2,\lambda^2-1)$ and thus defines an $\varepsilon\eta\,$-Einstein structure on $\mathrm{SU}(2)$.
\label{ex:riem}
\end{example}
\begin{example}
\label{ex:lor}
Let $\widetilde{\mathrm{Sl}}(2,\mathbb{R})$ denote the universal cover of the 2-dimensional special linear group. Consider a left-invariant global frame $\{e_0,e_1,e_2\}$ satisfying the following Lie brackets:
\begin{equation}
[e_0,e_1]=\lambda^2 e_2 \, , \quad [e_0,e_2]=-\lambda^2 e_1 \, , \quad [e_1,e_2]=-e_0\,,
\end{equation}
where $1 \geq \lambda^2 >0$. If $\{e^0,e^1,e^2\}$ is the corresponding dual coframe, let $\chi=-e^0 \otimes e^0 +e^1\otimes e^1+e^2\otimes e^2$ and $\alpha_L=e_0$. Then $(\chi,\alpha_L)$ defines a Lorentzian contact metric structure. Furthermore, since the Ricci curvature of $\chi$ reads:
\begin{equation}
\ric^\chi=-\frac{1}{2}(2\lambda^2-1)\chi+(1-\lambda^2) \alpha_L \otimes \alpha_L\,.
\end{equation}
Therefore $(\chi,\alpha_L)  \in \mathrm{Cont}^{\varepsilon\eta}_L(\widetilde{\mathrm{Sl}}(2,\mathbb{R}),-1,\lambda^2,1-\lambda^2)$, defining thus an $\varepsilon\eta\,$-Einstein structure.
\end{example}
\begin{example}
\label{ex:para}
Take $\widetilde{\mathrm{Sl}}(2,\mathbb{R})$ and let $\{e_0,e_1,e_2\}$ be  a left-invariant global frame satisfying the following Lie brackets:
\begin{equation}
[e_1,e_2]=-\lambda^2 e_0 \, , \quad [e_1,e_0]=-\lambda^2 e_2 \, , \quad [e_2,e_0]=e_1\,,
\end{equation}
where $\lambda^2 \geq 1$.  Let $\{e^0,e^1,e^2\}$ is the corresponding dual coframe and define $\alpha_L=e^1$ and the Lorentzian metric $\chi=-e^0 \otimes e^0 +e^1\otimes e^1+e^2\otimes e^2$. Then $(\chi,\alpha_L)$ defines a para-contact metric structure. On the other hand, the Ricci curvature of $\chi$ turns out to be:
\begin{equation}
\ric^\chi=-\frac{1}{2}(2\lambda^2-1)\chi+ (\lambda^2-1) \alpha_L \otimes \alpha_L\,.
\end{equation}
Consequently $(\chi,\alpha_L)  \in \mathrm{Cont}^{\varepsilon\eta}_L(\widetilde{\mathrm{Sl}}(2,\mathbb{R}),1,\lambda^2,\lambda^2-1)$, so that it is an $\varepsilon\eta\,$-Einstein structure.
\end{example}
\begin{example}
Let us take again $\widetilde{\mathrm{Sl}}(2,\mathbb{R})$ and consider    a left-invariant global frame $\{e_0,e_1,e_2\}$ enjoying the following Lie brackets:
\begin{equation}
[e_1,e_2]=-2 e_0-e_2\, , \quad [e_1,e_0]=e_0\, , \quad [e_2,e_0]=e_1\,.
\end{equation}
If  $\{e^0,e^1,e^2\}$ is the corresponding dual coframe, define $\alpha_L=\alpha_0 (e_0-e_2)$ for $\alpha_0 \neq 0$ and the Lorentzian metric $\chi=-e^0 \otimes e^0 +e^1\otimes e^1+e^2\otimes e^2$. We observe that $(\chi,\alpha_L)$ defines a null contact structure. In particular, the Ricci curvature of $\chi$ takes the form:
\begin{equation}
\ric^\chi=-\frac{1}{2} \chi+\frac{1}{\alpha_0^2}\alpha_L \otimes \alpha_L\,.
\end{equation}
We conclude that  $(\chi,\alpha_L)  \in \mathrm{Cont}^{\varepsilon\eta}_L(\widetilde{\mathrm{Sl}}(2,\mathbb{R}),0,1,\alpha_0^{-2})$ defines an $\varepsilon\eta\,$-Einstein structure.
\label{ex:null}
\end{example}
Our next objective is to describe an explicit procedure to construct solutions of six-dimensional minimal supergravity coupled to a tensor multiplet. We shall begin by defining the configuration space of the theory as well as by specifying which conditions must be satisfied in order to have proper solutions. For that, we will assume in the following that $\tm$ is an oriented six-dimensional manifold.

\begin{definition}
We define the bosonic configuration space of six-dimensional minimal supergravity coupled to a tensor multiplet with constant dilaton on $\tm$ as the set:
\begin{equation}
\mathrm{Conf}(\tm)=\{(\tg,H) \in (\mathrm{Lor}(\tm) \times \Omega^3(\tm)\}\,,
\end{equation}
where $\mathrm{Lor}(\tm)$ denotes the set of Lorentzian metrics on $\tm$.
\end{definition}
\begin{definition}
A pair $(\wh{g},\mathrm{H}) \in \Conf(\tm)$ is a bosonic solution of six-dimensional minimal supergravity coupled to a tensor multiplet with constant dilaton on $\tm$ if:
\begin{equation}
\label{eq:eom}
\ric^{\wh{g}}-\frac{1}{4} \H \circ \H=0\, , \quad \dd \H=\dd \star_{\wh{g}} \H=0\, , \quad \vert \H \vert_{\wh{g}}^2=0\,,
\end{equation}
where $\star_{\wh{g}}$ is the Hodge dual map associated to $\wh{g}$, $\vert \H \vert_{\wh{g}}^2$ denotes the norm of $\H$ with respect to $\wh{g}$ (obtained by contracting indices) and where for any three-forms $\rho, \sigma \in \Omega^3(\tm)$ we have defined the operation:
\begin{equation}
(\rho \circ \sigma) (X,Y)=\wh{g}^{-1} (\iota_X \rho, \iota_Y \sigma)\, , \quad  \forall X,Y \in \mathfrak{X}(\tm)\,.
\end{equation}
We  denote by $\mathrm{Sol}(\tm) \subset \Conf(\tm)$ the set of solutions on $\tm$.
\end{definition}
Now we are in disposition to state the theorem which tells us how to use $\varepsilon\eta\,$-Einstein structures to construct solutions of six-dimensional supergravity.
\begin{theorem}
\label{th:elteorema}
Let $(N,\chi)$ and $(X,h)$ be three-dimensional Lorentzian and Riemannian manifolds, respectively. Let:
\begin{equation}
\begin{split}
(\chi,\alpha_N,\varepsilon_N) \in & \, \, \mathrm{Cont}_L^{\varepsilon\eta}(N,\varepsilon_N,\lambda^2,l^2)\,, \\ (h,\alpha_X) \in & \, \,\mathrm{Cont}_R^{\varepsilon\eta}(X,\lambda^2,\varepsilon_N l^2)\,,
\end{split}
\end{equation}
where $\lambda, l \in \mathbb{R}$. Then, the oriented Cartesian product: manifold
\begin{equation}
\tm=N \times X\,
\end{equation}
carries a family of solutions $(\wh{g}, \H_{\lambda,l}) \in \mathrm{Sol}(\tm)$ of six-dimensional minimal supergravity coupled to a tensor multiplet with constant dilaton and given by:
\begin{equation}
\begin{split}
\wh{g}&=\chi \oplus h\, , \\
 \H_{\lambda,l}&=\lambda \nu_\chi+\frac{l}{3} (\star_\chi \alpha_N) \wedge \alpha_X+\frac{l}{3} \alpha_N \wedge ( \star_h \alpha_X) +\lambda \nu_h\,,
\end{split}
\end{equation}
and parametrized by $(\lambda, l ) \in \mathbb{R}^2$, where $\nu_\chi$ and $\nu_h$ denote the metric volume forms correspondingly. 
\end{theorem}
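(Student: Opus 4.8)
The plan is to verify directly the four defining equations of $\mathrm{Sol}(\tm)$ in \eqref{eq:eom} for the pair $(\wh{g},\H_{\lambda,l})$, exploiting systematically that $\wh{g}=\chi\oplus h$ is a (pseudo\nobreakdash-)Riemannian product. The basic input is that for a product metric the Levi-Civita Ricci tensor splits as $\ric^{\wh{g}}=\ric^\chi\oplus\ric^h$ with vanishing mixed block; feeding in the $\varepsilon\eta\,$-Einstein hypotheses together with $s_\chi=-1$, $s_h=1$ and the fact that the Riemannian contact one-form $\alpha_X$ has unit norm, this reads $\ric^\chi=-\tfrac12(\lambda^2+l^2\varepsilon_N)\chi+l^2\,\alpha_N\otimes\alpha_N$ and $\ric^h=\tfrac12(\lambda^2+\varepsilon_N l^2)h-\varepsilon_N l^2\,\alpha_X\otimes\alpha_X$. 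I would carry out all computations in a local frame adapted to $T\tm=TN\oplus TX$ and, on each factor, to the associated $\varepsilon\,$-contact frame, so that every tensor decomposes according to its ``product bidegree'' and the interior products $\iota_X\H_{\lambda,l}$ can be sorted accordingly.

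The three ``algebraic'' equations are then quick. Since $\nu_\chi$ and $\nu_h$ are closed as top-degree forms and $\dd\alpha_N=s_\chi\star_\chi\alpha_N$ forces $\dd\star_\chi\alpha_N=0$ (and likewise $\dd\alpha_X=s_h\star_h\alpha_X$, $\dd\star_h\alpha_X=0$), the exterior derivatives of the two mixed summands of $\H_{\lambda,l}$ are opposite multiples of $\star_\chi\alpha_N\wedge\star_h\alpha_X$ and cancel, so $\dd\H_{\lambda,l}=0$. Computing $\star_{\wh{g}}\H_{\lambda,l}$ via the product Hodge-star formula $\star_{\wh{g}}(\omega\wedge\eta)=(-1)^{(\deg\eta)(3-\deg\omega)}(\star_\chi\omega)\wedge(\star_h\eta)$ (with $\omega$ pulled back from $N$, $\eta$ from $X$) and $\star_g\star_g=s_g\,\mathrm{Id}$ on one-forms shows that $\star_{\wh{g}}$ maps the family $\{\H_{\lambda,l}\}$ into itself (merely flipping $\lambda\mapsto-\lambda$), so $\dd\star_{\wh{g}}\H_{\lambda,l}=0$ as well. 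Finally, the four summands carry distinct product bidegrees $(3,0)$, $(2,1)$, $(1,2)$, $(0,3)$, hence are mutually $\wh{g}$-orthogonal, and using $|\star_g\omega|_g^2=s_g|\omega|_g^2$, $|\nu_\chi|^2_{\wh{g}}=s_\chi$, $|\nu_h|^2_{\wh{g}}=s_h$, $|\alpha_N|^2_\chi=\varepsilon_N$ and $|\alpha_X|^2_h=1$ one gets $|\H_{\lambda,l}|^2_{\wh{g}}=(s_\chi+s_h)\bigl(\lambda^2+\tfrac19 l^2\varepsilon_N\bigr)=0$, since $s_\chi+s_h=0$.

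The substantive step is the Einstein equation $\ric^{\wh{g}}=\tfrac14\,\H_{\lambda,l}\circ\H_{\lambda,l}$. Writing $\H_{\lambda,l}=\H_1+\H_2+\H_3+\H_4$ for its four summands (in order) and using bilinearity, $\H_{\lambda,l}\circ\H_{\lambda,l}=\sum_{i,j}\H_i\diamond\H_j$ with $(\H_i\diamond\H_j)(Z,W)=\wh{g}^{-1}(\iota_Z\H_i,\iota_W\H_j)$. For $Z,W$ both in $TN$ (resp.\ both in $TX$), the product bidegree of $\iota_Z\H_i$ kills all off-diagonal pairs, leaving $\nu_\chi\diamond\nu_\chi$ (a multiple of $\chi$) together with the self-$\diamond$ products of the two mixed summands, which one evaluates using $\iota_v\star_g\alpha=-s_g(\phi(v))^\flat$ and the identity $g(\phi\otimes\phi)=s_g(\varepsilon g-\alpha\otimes\alpha)$ of Lemma~\ref{lemma:1} on each factor; collecting the three contributions reproduces precisely $\ric^\chi$ on $TN$ and $\ric^h$ on $TX$, and this is exactly where the signs $s_\chi=-1$, $s_h=1$ and the prescribed relation between the $\varepsilon\eta\,$-Einstein constants of the two factors (the same $\lambda^2$, and $\varepsilon_N l^2$ on the Riemannian side against $l^2$ on the Lorentzian side) get consumed. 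On the mixed block $Z\in TN$, $W\in TX$, the only bidegree-compatible terms are $\wh{g}^{-1}(\iota_Z\H_1,\iota_W\H_2)$, $\wh{g}^{-1}(\iota_Z\H_2,\iota_W\H_3)$ and $\wh{g}^{-1}(\iota_Z\H_3,\iota_W\H_4)$; the middle one vanishes because $\alpha_N\circ\phi_N=0$, while the outer two are both proportional to $\alpha_N(Z)\,\alpha_X(W)$ and enter with opposite sign (one carrying $s_\chi$, the other $s_h$), so they cancel, matching the vanishing of the mixed block of $\ric^{\wh{g}}$. Assembling the three blocks gives $\ric^{\wh{g}}-\tfrac14\H_{\lambda,l}\circ\H_{\lambda,l}=0$.

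I expect the main obstacle to be precisely this last computation: the term-by-term evaluation of $\H_{\lambda,l}\circ\H_{\lambda,l}$ in the product-adapted frame and its reconciliation with the block-diagonal Ricci tensor. All the sign bookkeeping --- the product Hodge star, interior products into wedge products of forms supported on different factors, and $\star\star$ on one-forms of each signature --- funnels into two non-trivial points: the cancellation on the mixed block, which is what forces $N$ to be Lorentzian, and the coefficient matching on the two diagonal blocks, which is what forces the stated relation between the $\varepsilon\eta\,$-Einstein constants of the two factors. Once these are in place, all four equations of \eqref{eq:eom} hold and $(\wh{g},\H_{\lambda,l})\in\mathrm{Sol}(\tm)$ for every $(\lambda,l)\in\mathbb{R}^2$.
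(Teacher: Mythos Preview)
Your proposal is correct and follows essentially the same route as the paper's proof: verify each of the four equations in \eqref{eq:eom} by decomposing according to the product structure $T\tm=TN\oplus TX$, and match the diagonal blocks of $\tfrac14\H_{\lambda,l}\circ\H_{\lambda,l}$ against $\ric^\chi$ and $\ric^h$ using the $\varepsilon\eta\,$-Einstein hypotheses. Two of your observations are slight streamlinings of what the paper does --- recognizing that $\star_{\wh{g}}\H_{\lambda,l}=\H_{-\lambda,l}$ so that co-closedness follows from closedness, and using bidegree orthogonality plus $s_\chi+s_h=0$ to get $|\H_{\lambda,l}|^2_{\wh{g}}=0$ --- and your analysis of the mixed block (identifying the three bidegree-compatible pairings, killing the middle one via $\alpha_N\circ\phi_N=0$, and cancelling the outer two via $s_\chi+s_h=0$) is actually more explicit than the paper, which simply asserts the vanishing; conversely, the paper writes out the diagonal-block coefficients in full whereas you only indicate the structure.
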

\begin{proof}
We begin by checking that $\H_{\lambda, l}$ is indeed closed: 
\begin{eqnarray*}
\dd \H_{\lambda, l} = \frac{l}{3}\left ( \dd(\star_\chi \alpha_N)\wedge \alpha_X +  (\star_\chi \alpha_N)\wedge \dd\alpha_X + \dd\alpha_N\wedge (\star_h \alpha_X) -  \alpha_N\wedge \dd (\star_h \alpha_X) \right ) \\ = \frac{l}{3}\left (  (\star_\chi \alpha_N)\wedge \dd\alpha_X +  \dd\alpha_N\wedge (\star_h \alpha_X) \right) = \frac{l}{3} \left ( \star_\chi \alpha_N\wedge \star_h\alpha_X -  \star_{\chi}\alpha_N\wedge \star_h \alpha_X \right) = 0\, ,
\end{eqnarray*}
where we used that $\dd \alpha_N=-\star_\chi \alpha_N$ and $\dd \alpha_X=\star_h \alpha_X$. Now we compute the Hodge dual of $\H_{\lambda,l}$:
\begin{equation}
\star_{\wh{g}} \H_{\lambda,l}=-\lambda \nu_h+\frac{l}{3} \alpha_N \wedge \star_h \alpha_X + \frac{l}{3} \star_\chi \alpha_N \wedge \alpha_X -\lambda \nu_\chi\,,
\end{equation}
where we have used that, for $ \rho \in \Omega^q(N)$ and $\sigma \in \Omega^r(X)$:
\begin{equation}
\star_{\wh{g}}(\rho \wedge \sigma)=(-1)^{r (3-q)} \star_\chi \rho \wedge \star_h \sigma\,.
\end{equation}
Then:
\begin{equation}
\dd \star_{\wh{g}} \H_{\lambda,l}= -\lambda\, \dd\nu_{h}  + \frac{l}{3}\, \dd(\alpha_N\wedge \star_h\alpha_X) + \frac{l}{3} \dd(\star_{\chi} \alpha_N\wedge\alpha_X)  - \lambda\,\dd\nu_{\chi} = 0\, .
\end{equation}
Similarly:
\begin{equation}
\begin{split}
\H_{\lambda,l} \wedge \star_{\wh{g}}  \H_{\lambda,}= \vert \H_{\lambda,l} \vert_{\wh{g}}^2 \nu_\chi \wedge \nu_h&=-\lambda^2 \nu_\chi \wedge \nu_h -\lambda^2 \nu_h \wedge \nu_\chi-\frac{l^2}{9} \vert \alpha_N \vert_\chi^2   \nu_\chi \wedge \nu_h \\&+\frac{l^2}{9} \vert \alpha_N \vert_\chi^2   \nu_\chi \wedge \nu_h=0\,,
\end{split}
\end{equation}
and hence $\vert \H_{\lambda,l} \vert_{\wh{g}}^2 =0$. Finally,  we have to verify that $\ric^{\wh{g}}=\frac{1}{4} \H_{\lambda,l} \circ \H_{\lambda,l}$. For that, we carry out the following computations:
\begin{eqnarray*}
&\H_{\lambda, l}\circ \H_{\lambda, l}\vert_{TN\otimes TN} = \lambda^2 \nu_N\circ \nu_N  + \frac{l^2}{9} (\star_\chi \alpha_N\wedge \alpha_X)\circ (\star_\chi \alpha_N\wedge \alpha_X)\vert_{TN\otimes TN} \\ 
&+ \frac{l^2}{9} (\alpha_N\wedge \star_h \alpha_X)\circ  (\alpha_N\wedge \star_h \alpha_X)\vert_{TN\otimes TN}\, .
\end{eqnarray*}
\noindent
We work out also the following:
\begin{eqnarray*}
& \lambda^2 \nu_N\circ \nu_N = -2\,\lambda^2 \chi\, , \quad (\alpha_N\wedge \star_h \alpha_X)\circ  (\alpha_N\wedge \star_h \alpha_X)\vert_{TN\otimes TN} = 18\,\alpha_N\otimes \alpha_N \,,\\
& (\star_\chi \alpha_N\wedge \alpha_X)\circ (\star_\chi \alpha_N\wedge \alpha_X)\vert_{TN\otimes TN} = 18 (\alpha_N\otimes \alpha_N - \vert \alpha_N\vert^2_{\chi} \,\chi)\, .
\end{eqnarray*}
\noindent
This implies that 
\begin{equation}
\frac{1}{4}\H_{\lambda, l}\circ \H_{\lambda, l}\vert_{TN\otimes TN} = -\frac{\lambda^2}{2}\,\chi -  \frac{l^2}{2}\vert \alpha_N\vert^2_{\chi} \,\chi + l^2 \alpha_N\otimes \alpha_N\, .
\end{equation}
Analogously, we have that:
\begin{equation}
\frac{1}{4}\H_{\lambda, l}\circ \H_{\lambda, l}\vert_{TX\otimes TX} = \frac{\lambda^2}{2}\,h + \frac{l^2}{2}\vert \alpha_N\vert^2_{\chi} \, h - l^2 \vert \alpha_N\vert^2_{\chi} \alpha_X\otimes \alpha_X\, .
\end{equation}
Finally, it can be seen that the mixed components vanish identically:
\begin{equation*}
\H_{\lambda, l}\circ \H_{\lambda, l}\vert_{TN\otimes TX} = \H_{\lambda, l}\circ \H_{\lambda, l}\vert_{TX\otimes TN} = 0\, .
\end{equation*}
Consequently, taking into account that $(\chi,\alpha_N,\varepsilon_N) \in  \, \, \mathrm{Cont}_L^{\varepsilon\eta}(N,\varepsilon_N,\lambda^2, l^2)$ and $ (h,\alpha_X) \in  \, \,\mathrm{Cont}_R^{\varepsilon\eta}(X,\lambda^2,\varepsilon_N l^2)$, we encounter that:
\begin{equation}
\frac{1}{4}\H_{\lambda, l}\circ \H_{\lambda, l}=\frac{1}{4}\H_{\lambda, l}\circ \H_{\lambda, l}\vert_{TN\otimes TN} +\frac{1}{4}\H_{\lambda, l}\circ \H_{\lambda, l}\vert_{TX\otimes TX}=\ric^\chi+ \ric^h\,.
\end{equation} 
Therefore we prove that $(\wh{g}, \H_{\lambda,l}) \in \mathrm{Sol}(\tm)$ and we conclude.
\end{proof}
\begin{remark}
Let $\nabla^\H$ be the unique metric compatible connection on $(\tm, \wh{g})$ with totally skew-symmetric torsion given by $\H \in \Omega^3(\tm)$:
\begin{equation}
\nabla^{\H}=\nabla^{\wh{g}}+\frac{1}{2} \wh{g}^{-1} \H\,,
\end{equation}
being $\nabla^{\wh{g}}$ the Levi-Civita connection associated to $\wh{g}$. Then the Ricci curvature tensor of $\nabla^{\H}$, $\ric(\nabla^{\H})$, is related to that of $\nabla^{\wh{g}}$ by:
\begin{equation}
\ric(\nabla^{\H})=\ric^{\wh{g}}-\frac{1}{4} \H \circ \H=0\,.
\end{equation}
Consequently, under the conditions  and nomenclature of Theorem \ref{th:elteorema}, we conclude via the first equation of (\ref{eq:eom}) that the oriented Cartesian Lorentzian product $(N \times X, \chi \oplus h)$ carries a bi-parametric family of metric-compatible, Ricci-flat connections with totally skew-symmetric, isotropic, closed and co-closed torsion given by $\H_{\lambda,l}$.
\end{remark}
\begin{remark}
Solutions of (\ref{eq:eom}) constructed as the Theorem \ref{th:elteorema} proposes do not need to be supersymmetric (to see the precise definition of supersymmetric solution in this context, we refer the reader to \cite{Akyol:2010iz,Gutowski:2003rg,Angel} and references therein). Therefore, this Theorem gives us a way to obtain generically non-supersymmetric solutions. 
\end{remark}
We finish the document by presenting some explicit examples of solutions of six-dimensional minimal supergravity through the use of Theorem \ref{th:elteorema}.
\begin{example}
Let $(\chi,\alpha_L,-1) \in  \, \, \mathrm{Cont}_L^{\varepsilon\eta}(\widetilde{\mathrm{Sl}}(2,\mathbb{R}),-1,\lambda^2, 1-\lambda^2)$ with $1\geq \lambda^2>0$ be the $\varepsilon\eta\,$-Einstein structure of Example \ref{ex:lor} and $ (h,\alpha_R) \in  \, \,\mathrm{Cont}_R^{\varepsilon\eta}(\mathrm{SU}(2),\lambda^2,\lambda^2-1)$ as in Example \ref{ex:riem}. Then $(\chi \oplus h, \H_{\lambda,l})$, as dictated by Theorem \ref{th:elteorema}, is a solution of six-dimensional minimal supergravity coupled to a tensor multiplet with constant dilaton constructed through the product of a Lorentzian contact metric structure and a Riemannian contact metric structure. 
\end{example}
\begin{example}
Take $(\chi,\alpha_L,1) \in  \, \, \mathrm{Cont}_L^{\varepsilon\eta}(\widetilde{\mathrm{Sl}}(2,\mathbb{R}),1,\lambda^2, \lambda^2-1)$ with $\lambda^2 \geq 1$ as in Example \ref{ex:para} and let $ (h,\alpha_R) \in  \, \,\mathrm{Cont}_R^{\varepsilon\eta}(\mathrm{SU}(2),\lambda^2,\lambda^2-1)$ be the $\varepsilon\eta\,$-Einstein structure of Example \ref{ex:riem}. Then $(\chi \oplus h, \H_{\lambda,l}) \in \mathrm{Sol}(\tm)$, constructed as the product of a para-contact metric structure and a Riemannian contact metric structure. 
\end{example}
\begin{example}
Take $(\chi,\alpha_L,0) \in  \, \, \mathrm{Cont}_L^{\varepsilon\eta}(\widetilde{\mathrm{Sl}}(2,\mathbb{R}),0,1, \alpha_0^{-2})$ with $\lambda^2 \geq 1$ as in Example \ref{ex:null} and $ (h,\alpha_X) \in  \, \,\mathrm{Cont}_R^{\varepsilon\eta}(\mathrm{SU}(2),1,0)$ be the $\varepsilon\eta\,$-Einstein structure of Example \ref{ex:riem} with $\lambda^2 = 1$. Then $(\chi \oplus h, \H_{\lambda,l})$, as prescribed by Theorem \ref{th:elteorema}, is a solution of six-dimensional minimal supergravity coupled to a tensor multiplet with constant dilaton obtained through the product of a null contact metric structure and a Riemannian contact metric structure.
\end{example}

%
%

\end{document}